\theoremstyle{plain}
\newtheorem{theorem}{Theorem}
\newtheorem{lemma}{Lemma}
\newtheorem{proposition}{Proposition}
\title[Thue--Morse--Sturmian words and critical bases]{Thue--Morse--Sturmian words and critical bases for ternary alphabets}
\author{Wolfgang Steiner}
\address{IRIF, CNRS UMR 8243, Universit\'e Paris Diderot -- Paris 7, Case 7014, 75205 Paris Cedex 13, FRANCE}
\email{steiner@irif.fr}
\thanks{This work was supported by the Agence Nationale de la Recherche through the project \emph{CODYS} (ANR-18-CE40-0007).}
\begin{document}
\begin{abstract}
The set of unique $\beta$-expansions over the alphabet $\{0,1\}$ is trivial for $\beta$ below the golden ratio and uncountable above the Komornik--Loreti constant. 
Generalisations of these thresholds for three-letter alphabets were studied by Komornik, Lai and Pedicini (2011, 2017).
We use $S$-adic words including the Thue--Morse word (which defines the Komornik--Loreti constant) and Sturmian words (which characterise generalised golden ratios) to determine the value of a certain generalisation of the Komornik--Loreti constant to three-letter alphabets.
\end{abstract}
\maketitle

\section{Introduction and main results}
For a base $\beta > 1$ and a sequence of digits $u_1 u_2 \cdots \in A^\infty$, with $A \subset \mathbb{R}$, let
\[
\pi_\beta(u_1 u_2\cdots) = \sum_{k=1}^\infty \frac{u_k}{\beta^k}. 
\]
We say that $u_1 u_2 \cdots$ is a $\beta$-expansion of this number.
This paper deals with \emph{unique $\beta$-expansions over~$A$}, that is with
\[
U_\beta(A) = \{\mathbf{u} \in A^\infty:\, \pi_\beta(\mathbf{u}) \ne \pi_\beta(\mathbf{v})\ \mbox{for all}\ \mathbf{v} \in A^\infty \setminus \{\mathbf{u}\}\}.
\]
We know from \cite{DaroczyKatai93} that $U_\beta(\{0,1\})$ is trivial if and only if $\beta \le \frac{1+\sqrt{5}}{2}$, where trivial means that $U_\beta(\{0,1\}) = \{\overline{0}, \overline{1}\}$, $\overline{a}$ being the infinite repetition of~$a$.
Therefore, 
\[
\mathcal{G}(A) = \inf\{\beta > 1:\, |U_\beta(A)| > 2\}
\]
is called \emph{generalised golden ratio} of~$A$.
By \cite{GlendinningSidorov01}, the set $U_\beta(\{0,1\})$ is uncountable if and only if $\beta$ is larger than the Komornik--Loreti constant $\beta_{\mathrm{KL}} \approx 1.787$; we call
\[
\mathcal{K}(A) = \inf\{\beta > 1:\, U_\beta(A)\ \mbox{is uncountable}\}
\]
\emph{generalised Komornik--Loreti constant} of~$A$.
(We can replace \emph{uncountable} throughout the paper by \emph{has the cardinality of the continuum}.)
The precise structure of $U_\beta(\{0,1\})$ was described in \cite{KomornikKongLi17}. 
For integers $M \ge 2$, $\mathcal{G}(\{0,1,\ldots,M\})$ was determined by \cite{Baker14}, and $U_\beta(\{0,1,\ldots,M\})$ was described in \cite{KongLiLudeVries17,AlcarazBarreraBakerKong19}.

For $x, y \in \mathbb{R}$, $x \ne 0$, we have $(xu_1+y_1)(xu_2+y_2)\cdots \in U_\beta(xA+y)$ if and only if $u_1u_2\cdots \in U_\beta(A)$, thus $\mathcal{G}(xA+y) = \mathcal{G}(A)$ and $\mathcal{K}(xA+y) = \mathcal{K}(A)$.
Hence, the only two-letter alphabet to consider is $\{0,1\}$, and we can restrict to $\{0,1,m\}$, $m \in (1,2]$, for three-letter alphabets; another possibility is $m \ge 2$ as in \cite{KomornikLaiPedicini11}.
We write
\[
U_\beta(m) = U_\beta(\{0,1,m\}), \quad \mathcal{G}(m) = \mathcal{G}(\{0,1,m\}), \quad \mathcal{K}(m) = \mathcal{K}(\{0,1,m\}).
\]
It was established in \cite{KomornikLaiPedicini11,Lai11,BakerSteiner17} that the generalised golden ratio $\mathcal{G}(m)$ is given by mechanical words, i.e., Sturmian words and their periodic counterparts; in particular, we can restrict to sequences $\mathbf{u} \in \{0,1\}^\infty$. 
Calculating $\mathcal{K}(m)$ seems to be much harder since this restriction is not possible.
Therefore, we study
\[
\mathcal{L}(m) = \inf\{\beta > 1:\, U_\beta(m) \cap \{0,1\}^\infty\ \mbox{is uncountable}\},
\]
following~\cite{KomornikPedicini17}, where this quantity was determined for certain intervals.
We give a complete characterisation in Theorem~\ref{t:main} below.

To this end, we use the substitutions (or morphisms)
\begin{align*}
L:\ & 0 \mapsto 0, & M:\ & 0 \mapsto 01, & R:\ & 0 \mapsto 01, \\
& 1 \mapsto 01, & & 1 \mapsto 10, & & 1 \mapsto 1,
\end{align*} 
which act on finite and infinite words by $\sigma(u_1u_2\cdots) = \sigma(u_1)\sigma(u_2)\cdots$. 
The monoid generated by a set of substitutions~$S$ (with the usual product of substitutions) is denoted by~$S^*$. 
An infinite word $\mathbf{u}$ is a \emph{limit word} of a sequence of substitutions $(\sigma_n)_{n\ge1}$ (or an \emph{$S$-adic word} if $\sigma_n \in S$ for all $n \ge 1$) if there is a sequence of words $(\mathbf{u}^{(n)})_{n\ge1}$ with $\mathbf{u}^{(1)} = \mathbf{u}$, $\mathbf{u}^{(n)} = \sigma_n(\mathbf{u}^{(n+1)})$ for all $n \ge 1$.
The sequence $(\sigma_n)_{n\ge1}$ is \emph{primitive} if for each $k \ge 1$ there is an $n \ge k$ such that both words $\sigma_k\sigma_{k+1}\cdots\sigma_n(0)$ and $\sigma_k\sigma_{k+1}\cdots\sigma_n(1)$ contain both letters $0$ and~$1$.
For $S = \{L,M,R\}$, this means that there is no $k \ge 1$ such that $\sigma_n = L$ for all $n \ge k$ or $\sigma_n = R$ for all $n \ge k$.
Let $\mathcal{S}_S$ be the set of limit words of primitive sequences of substitutions in~$S^\infty$.
Then $\mathcal{S}_{\{L,R\}}$ consists of \emph{Sturmian words}, and $\mathcal{S}_{\{M\}}$ consists of the \emph{Thue-Morse word} $0\mathbf{u} = 0110100110010110\cdots$, which defines the Komornik--Loreti constant by $\pi_{\beta_{\mathrm{KL}}}(\mathbf{u}) = 1$, and its reflection by $0\leftrightarrow1$. 
We call the elements of~$\mathcal{S}_{\{L,M,R\}}$, which to our knowledge have not been studied yet, \emph{Thue--Morse--Sturmian words}. 
For details on $S$-adic and other words, we refer to \cite{Lothaire02,BertheDelecroix14}.

For $\mathbf{u} \in \{0,1\}^\infty$ and $m \in (1,2]$, define $f_\mathbf{u}(m)$ (if $\mathbf{u}$ contains at least two ones) and $g_\mathbf{u}(m)$ as the unique positive solutions of 
\[
f_\mathbf{u}(m)\, \pi_{f_\mathbf{u}(m)}(\sup O(\mathbf{u})) = m
\quad \mbox{and} \quad (g_\mathbf{u}(m)-1)(1+\pi_{g_\mathbf{u}(m)}(\inf O(\mathbf{u}))) = m
\]
respectively, where $O(u_1 u_2 \cdots) = \{u_k u_{k+1} \cdots:\, k \ge 1\}$ denotes the shift orbit and infinite words are ordered by the \emph{lexicographic order}.
For the existence and monotonicity properties of $f_\mathbf{u}(m)$ and $g_\mathbf{u}(m)$, see Lemma~\ref{l:fg} below.
We define $\mu_\mathbf{u}$ by 
\[
f_\mathbf{u}(\mu_\mathbf{u}) = g_\mathbf{u}(\mu_\mathbf{u}),
\]
i.e., $f_\mathbf{u}(\mu_\mathbf{u}) = g_\mathbf{u}(\mu_\mathbf{u}) = \beta$ with $\beta\, \pi_\beta(\sup O(\mathbf{u})) = (\beta-1)(1+\pi_\beta(\inf O(\mathbf{u})))$.

The main result of \cite{KomornikLaiPedicini11} can be written as 
\[
\mathcal{G}(m) = \begin{cases}f_{\sigma(\overline{0})}(m) & \mbox{if}\ m \in [\mu_{\sigma(1\overline{0})}, \mu_{\sigma(\overline{0})}],\, \sigma \in \{L,R\}^* M,\\[.5ex]
g_{\sigma(\overline{0})}(m) & \mbox{if}\ m \in [\mu_{\sigma(\overline{0})}, \mu_{\sigma(0\overline{1})}],\,  \sigma \in \{L,R\}^* M, \\[.5ex]
f_{\overline{1}}(m) & \mbox{if}\ m \in [\mu_{0\overline{1}}, 2], \\[.5ex]
1+\sqrt{m} & \mbox{if}\ m = \mu_{\mathbf{u}},\, \mathbf{u} \in \mathcal{S}_{\{L,R\}}; \end{cases}
\]
cf.\ \cite[Proposition~3.18]{BakerSteiner17}, where substitutions $\tau_h = L^h R$ are used and $f,g,\mu,\mathcal{S}$ are defined slightly differently.
Our main theorem looks similar, but we need $\{L,M,R\}$ instead of $\{L,R\}$, and the roles of $f$ and $g$ are exchanged. 

\begin{theorem} \label{t:main}
The function $\mathcal{L}(m)$, $1 < m \le 2$, is given by
\[
\mathcal{L}(m) = \begin{cases}
g_{\sigma(1\overline{0})}(m) & \mbox{if}\ m \in [\mu_{\sigma(1\overline{0})}, \mu_{\sigma(01\overline{0})}],\, \sigma \in \{L,M,R\}^* M, \\[.5ex]
f_{\sigma(0\overline{1})}(m) & \mbox{if}\ m \in [\mu_{\sigma(10\overline{1})}, \mu_{\sigma(0\overline{1})}],\, \sigma \in \{L,M,R\}^* M, \\[.5ex]
g_{0\overline{1}}(m) & \mbox{if}\ m \in [\mu_{0\overline{1}},2], \\[.5ex]
f_\mathbf{u}(m) & \mbox{if}\ m = \mu_\mathbf{u},\, \mathbf{u} \in \mathcal{S}_{\{L,M,R\}}.
\end{cases}
\]
The Hausdorff dimension of $\pi_\beta(U_\beta(m))$ is positive for all $\beta > \mathcal{L}(m)$. 
\end{theorem}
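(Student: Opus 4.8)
The plan is to reduce the computation of $\mathcal{L}(m)$ to a purely combinatorial question about the lexicographic structure of shift orbits, and then to show that the boundary of the region where $U_\beta(m)\cap\{0,1\}^\infty$ is uncountable is governed by $S$-adic words with $S=\{L,M,R\}$. First I would recall (or establish in a preliminary lemma) the standard characterisation of $U_\beta(m)\cap\{0,1\}^\infty$: a word $\mathbf{u}\in\{0,1\}^\infty$ is a unique $\beta$-expansion over $\{0,1,m\}$ if and only if every tail $u_ku_{k+1}\cdots$ that begins with $0$ satisfies $\pi_\beta(u_{k+1}u_{k+2}\cdots) < $ (the value forcing a competing expansion using the digit $1$ or $m$), and dually every tail beginning with $1$ satisfies a lower bound involving the digit below it. Concretely this becomes a pair of conditions of the form: whenever $u_k=0$ then $\sigma$-shifted tail is lexicographically below a threshold word depending on $\sup O(\mathbf{u})$ and $m$, and whenever $u_k=1$ then it is above a threshold depending on $\inf O(\mathbf{u})$ and $m$; these two thresholds are exactly what define $f_\mathbf{u}(m)$ and $g_\mathbf{u}(m)$. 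So $U_\beta(m)\cap\{0,1\}^\infty$ is nonempty/large precisely when $f_\mathbf{u}(m)\le\beta\le g_\mathbf{u}(m)$ admits room, and one gets an uncountable set as soon as $\beta$ exceeds $\mathcal{L}(m)=\inf_\mathbf{u}\max(f_\mathbf{u}(m),\ldots)$, the infimum being over a suitable family of words.

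Next I would carry out the $S$-adic/renormalisation step, which is the heart of the matter and parallels the role of the Thue--Morse word in the classical Komornik--Loreti setting. The idea is that the extremal words $\mathbf{u}$ realising the infimum are fixed points (or limit words) of the substitutions $L,M,R$: applying $M$ corresponds to the Thue--Morse-type doubling that produces the Komornik--Loreti threshold, while $L$ and $R$ are the Sturmian moves that interpolate between the ``golden-ratio'' regimes $\mu_\mathbf{u}$. I would prove a desubstitution lemma: given $m$ in one of the intervals $[\mu_{\sigma(1\overline0)},\mu_{\sigma(01\overline0)}]$ with $\sigma=\sigma'M$, there is a change of variable $m\mapsto m'$, $\beta\mapsto\beta'$ (the renormalisation induced by stripping off the leading substitution of $\sigma$) under which the uniqueness conditions for words starting with $\sigma$-blocks transform into the uniqueness conditions on the ``derived'' alphabet, and $\mathcal{L}$ transforms covariantly. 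Iterating this collapses every interval in the statement to the base case, where $m$ is near $2$ and the answer $g_{0\overline1}(m)$ is checked directly (here $\sup O(\mathbf{u})$ and $\inf O(\mathbf{u})$ are attained at the constant tails $\overline1$, $\overline0$, and the computation is elementary). The monotonicity and continuity of $f_\mathbf{u}$, $g_\mathbf{u}$ from Lemma~\ref{l:fg}, together with the fact that $\mu_\mathbf{u}$ is continuous and monotone along admissible $S$-adic sequences, glue the pieces: on each interval one of $f_{\sigma(0\overline1)}$, $g_{\sigma(1\overline0)}$ dominates, and at the Sturmian-type endpoints $m=\mu_\mathbf{u}$ with $\mathbf{u}\in\mathcal{S}_{\{L,M,R\}}$ the two agree by definition of $\mu_\mathbf{u}$, so the four cases patch into a continuous function.

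For the final assertion on Hausdorff dimension, the plan is to exhibit, for each $\beta>\mathcal{L}(m)$, a subshift $X\subseteq U_\beta(m)$ of positive entropy and then transfer positive entropy to positive Hausdorff dimension of $\pi_\beta(X)\subseteq\pi_\beta(U_\beta(m))$ via the standard estimate $\dim_H \pi_\beta(X)\ge h_{\mathrm{top}}(X)/\log\beta$ (which holds because $\pi_\beta$ is Lipschitz and, on a subshift avoiding the ``branching'' configurations, essentially bi-Lipschitz onto its image, or more robustly by a mass-distribution/Billingsley argument using the uniform Bernoulli measure on $X$). To build $X$: since $\beta>\mathcal{L}(m)$, by definition and by the strict monotonicity of $f_\mathbf{u},g_\mathbf{u}$ there is a word $\mathbf{w}\in\mathcal{S}_{\{L,M,R\}}$ and an $\varepsilon>0$ with $f_\mathbf{w}(m)<\beta-\varepsilon$ and $g_\mathbf{w}(m)>\beta+\varepsilon$; then all words whose tails lie in a small enough lexicographic neighbourhood of $O(\mathbf{w})$ satisfy the uniqueness inequalities with room to spare, and a $\sigma$-block coding argument (using that a primitive $S$-adic limit word is not eventually periodic, hence its factor complexity grows and one can freely concatenate two distinct long prefixes of return words) produces a full shift on $\ge 2$ symbols inside $U_\beta(m)$, giving $h_{\mathrm{top}}(X)>0$.

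The step I expect to be the main obstacle is the desubstitution lemma in the second paragraph: making the renormalisation $ (m,\beta)\mapsto(m',\beta')$ precise, checking that it maps the uniqueness conditions \emph{exactly} (not just approximately) onto the derived ones for each of $L$, $M$, $R$, and verifying that it is compatible with the lexicographic suprema and infima $\sup O(\mathbf{u})$, $\inf O(\mathbf{u})$ under the three substitutions — $M$ in particular does not preserve the lexicographic order, so one must track carefully how $\sup O$ and $\inf O$ of $M(\mathbf{u})$ relate to those of $\mathbf{u}$. Getting these order relations right, and showing the renormalisation intervals tile $(1,2]$ without gaps or overlaps (which is where the primitivity hypothesis on the $S$-adic sequence and the endpoint identities $f_\mathbf{u}(\mu_\mathbf{u})=g_\mathbf{u}(\mu_\mathbf{u})$ are used), is the technical crux.
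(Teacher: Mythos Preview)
Your high-level outline --- reduce membership in $U_\beta(m)\cap\{0,1\}^\infty$ to lexicographic conditions on $\inf O(\mathbf{u})$ and $\sup O(\mathbf{u})$, match upper and lower bounds via a desubstitution mechanism, and verify that the pieces cover $(1,2]$ --- is the structure the paper follows. But the central step you propose is genuinely different, and in its present form it contains a gap. The paper does \emph{not} use a parameter renormalisation $(m,\beta)\mapsto(m',\beta')$. Its desubstitution result (Lemma~\ref{l:sigma}) is purely combinatorial: if $\inf(\mathbf{u})\ge\inf(\sigma(1\overline{0}))$ and $\sup(\mathbf{u})\le\sup(\sigma(0\overline{1}))$, then $\mathbf{u}$ ends with $\sigma(\mathbf{v})$ for some $\mathbf{v}$. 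The lower bound (Proposition~\ref{p:lower}) then follows because an aperiodic $\mathbf{v}\in U_\beta(m)\cap\{0,1\}^\infty$ with $\beta$ below the claimed threshold would simultaneously satisfy $\sup(\mathbf{v})<\sup(\sigma(01\overline{0}))$ and be eventually in the image of $\sigma$, a contradiction. The upper bound (Proposition~\ref{p:upper}) is by explicit construction of families $\sigma(\{0(01)^h,0(01)^{h+1}\}^\infty)\subseteq U_\beta(m)$, which also yields the Hausdorff-dimension claim via Hutchinson. The tiling of $(1,\mu_{0\overline{1}})$ (Proposition~\ref{p:cover}) is obtained by first partitioning the sequence intervals $(\overline{0},0\overline{1})$ and $(1\overline{0},\overline{1})$ according to $\boldsymbol{\sigma}\in\{L,M,R\}^\infty$ (Lemma~\ref{l:partition}) and then transporting that partition to $m$-space via $f_\cdot$ and $g_\cdot$.

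Your alternative --- a conjugacy on the parameter space --- would have to send the uniqueness conditions for $\varphi(\mathbf{v})$ in base $\beta$ over $\{0,1,m\}$ to those for $\mathbf{v}$ in some base $\beta'$ over $\{0,1,m'\}$. The difficulty is that $f_\mathbf{u}$ and $g_\mathbf{u}$ transform incompatibly under $L,M,R$: by Lemma~\ref{l:infsup} one has $\inf(M(\mathbf{u}))=0\,M(\inf(\mathbf{u}))$ but $\sup(M(\mathbf{u}))=1\,M(\sup(\mathbf{u}))$, and for $L$ versus $R$ the prefix behaviour differs again. So the two defining equations $\beta\,\pi_\beta(\sup)=m$ and $(\beta-1)(1+\pi_\beta(\inf))=m$ do not pull back to equations of the same shape in a common $(\beta',m')$. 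You do not exhibit such a map, and you identify this as the main obstacle; I do not see how to overcome it, which is why the paper works directly with sequences rather than with parameters.

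Two smaller corrections. First, the inequality you use for the dimension argument, ``$f_{\mathbf{w}}(m)<\beta-\varepsilon$ and $g_{\mathbf{w}}(m)>\beta+\varepsilon$'', has the wrong sign on $g$: by Lemma~\ref{l:U}, $\mathbf{u}\in U_\beta(m)$ requires $\beta\ge\max(f_\mathbf{u}(m),g_\mathbf{u}(m))$, so you need \emph{both} $f_\mathbf{w}(m)$ and $g_\mathbf{w}(m)$ below $\beta$; the same confusion appears earlier where you write ``$f_\mathbf{u}(m)\le\beta\le g_\mathbf{u}(m)$ admits room''. Second, contrary to what you assert, $M$ \emph{is} order-preserving on infinite words (this is used in the proof of Lemma~\ref{l:infsup}); what changes under $M$ is only that a one-letter prefix is inserted before $M(\inf(\mathbf{u}))$ and $M(\sup(\mathbf{u}))$.
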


The graphs of $\mathcal{G}(m)$ and $\mathcal{L}(m)$ are drawn in Figure~\ref{f:GL}.
For example, $\sigma = M$ gives
\[
\mathcal{L}(m) = \left\{\begin{array}{ll}g_{0\overline{01}}(m) & \mbox{if}\ m \in [\mu_{0\overline{01}}, \mu_{110\overline{01}}] \approx [1.281972,1.46811], \\
f_{1\overline{10}}(m) & \mbox{if}\ m \in [\mu_{001\overline{10}}, \mu_{1\overline{10}}] \approx [1.516574,1.55496].\end{array}\right.
\]
Taking $\sigma = M^2$, we have $\sigma(0) = 0110$, $\sigma(1) = 1001$, and
\[
\mathcal{L}(m) = \left\{\begin{array}{ll}g_{001\overline{0110}}(m) & \mbox{if}\ m \in [\mu_{001\overline{0110}}, \mu_{1101001\overline{0110}}] \approx [1.47571,1.503114], \\
f_{110\overline{1001}}(m) & \mbox{if}\ m \in [\mu_{0010110\overline{1001}},  \mu_{110\overline{1001}}] \approx [1.504152,1.509304].\end{array}\right.
\] 
Subintervals of the first three intervals were also given by \cite{KomornikPedicini17}. 

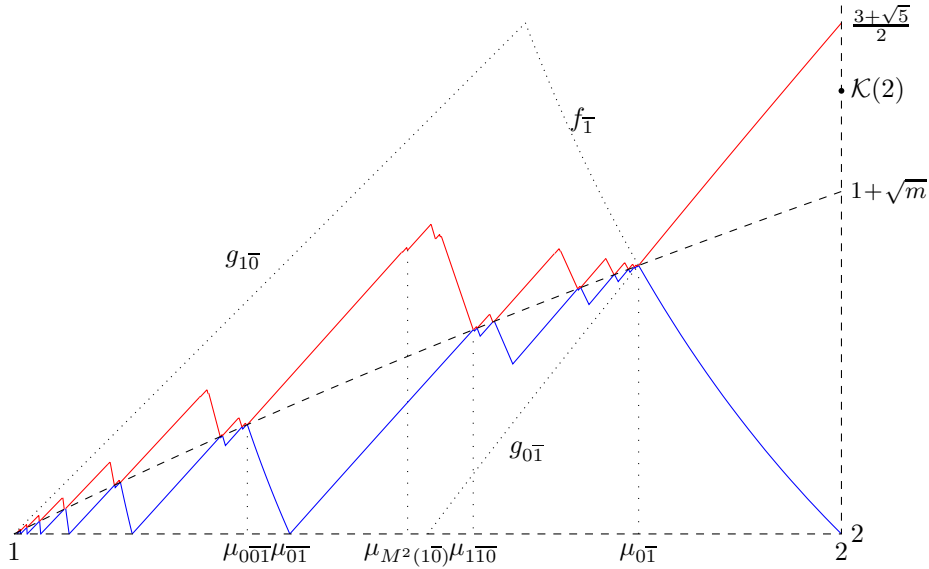
\begin{figure}[ht]
\begin{tikzpicture}[scale=11]
\begin{scope}[dotted]
\draw[domain=1.5:1.7549] plot (\x, {(\x+1+sqrt(\x^2+2*\x-3))/2});  
\node at (1.62,2.1){$g_{0\overline{1}}$};
\draw(1,2)--node[above left]{$g_{1\overline{0}}$}(1.618,2.618);
\draw[domain=1.618:1.7549] plot (\x, {\x/(\x-1)}); 
\node[right] at (1.66,2.5){$f_{\overline{1}}$};
\end{scope}

\begin{scope}[loosely dotted]
\draw(1.282,2.1322)--(1.282,2)node[below]{$\mu_{0\overline{01}}$}; 
\node[below] at (1.3333,2){$\mu_{\overline{01}}$}; 
\draw(1.4757,2.3432)--(1.4757,2)node[below]{$\mu_{M^2(1\overline{0})}$}; 
\draw(1.555,2.247)--(1.555,2)node[below]{$\mu_{1\overline{10}}$}; 
\draw(1.7549,2.3247)--(1.7549,2)node[below]{$\mu_{0\overline{1}}$}; 
\end{scope}

\begin{scope}[blue]
\draw[domain=1.00389:1.00392] plot (\x, {(\x/(\x-1))^(1/8)}); 
\draw(1.00392,2)--(1.0077,2.0039);

\draw[domain=1.00777:1.00787] plot (\x, {(\x/(\x-1))^(1/7)}); 
\draw(1.0079,2)--(1.0154,2.0077);

\draw[domain=1.0155:1.0159] plot (\x, {(\x/(\x-1))^(1/6)}); 
\draw(1.0159,2)--(1.0305,2.0152);

\draw[domain=1.031:1.0323] plot (\x, {(\x/(\x-1))^(1/5)}); 
\draw(1.0323,2)--(1.0607,2.0299);

\draw[domain=1.06247:1.06667] plot (\x, {(\x/(\x-1))^(1/4)}); 
\draw(1.0667,2)--(1.1213,2.0589);

\draw(1.1217,2.0591)--(1.1223,2.056)--(1.1279,2.062); 

\draw[domain=1.1287:1.14286] plot (\x, {(\x/(\x-1))^(1/3)}); 
\draw(1.14286,2)--(1.2492,2.1177);

\draw(1.2495,2.1178)--(1.2499,2.1166)--(1.252,2.1189); 

\draw(1.252,2.1189)--(1.2555,2.1069)--(1.2755,2.1294); 

\draw(1.2756,2.1294)--(1.2764,2.1269)--(1.2806,2.1316); 

\draw[domain=1.3333:1.555] plot (\x, {\x/2+sqrt(\x*\x/4+\x)}); %
\draw[domain=1.282:1.3333] plot (\x, {sqrt(\x/(\x-1))});

\draw(1.5559,2.24735)--(1.5566,2.2455)--(1.5596,2.24885); 

\draw(1.5597,2.2489)--(1.5634,2.2392)--(1.5787,2.2565); 

\draw(1.5805,2.2572)--(1.6028,2.20557)--(1.68125,2.2966); 

\draw(1.6815,2.2967)--(1.682,2.295)--(1.6848,2.298); 

\draw(1.6849,2.298)--(1.6949,2.2775)--(1.7252,2.3135); 

\draw(1.7258,2.3137)--(1.7302,2.3052)--(1.7425,2.32); 

\draw(1.7426,2.3201)--(1.7445,2.3165)--(1.7496,2.3227); 

\draw(1.7496,2.3227)--(1.7505,2.3212)--(1.7526,2.3239); 

\draw(1.7526,2.3239)--(1.753,2.3232)--(1.7539,2.3244); 

\draw[domain=1.7549:2] plot (\x, {\x/(\x-1)}); 
\end{scope}

\begin{scope}[red]
\draw(1.0039, 2.0019)--(1.0077, 2.0057) (1.0077, 2.0058)--(1.0077,2.0039); 

\draw(1.0078, 2.0039)--(1.0152, 2.0114) (1.0152, 2.0114)--(1.0154,2.0077); 

\draw(1.0155, 2.0077)--(1.0298, 2.0223) (1.0300, 2.0224)--(1.0305,2.0152); 

\draw(1.0310, 2.0154)--(1.0582, 2.0434) (1.0590, 2.0438)--(1.0607,2.0299); 

\draw(1.0625, 2.0308)--(1.1134, 2.0843) (1.1164, 2.0857)--(1.1213,2.0589); 
\draw(1.1136, 2.0844)--(1.1160, 2.0869) (1.1160, 2.0869)--(1.1162, 2.0856); 

\draw(1.1217, 2.0591)--(1.1272, 2.0650) (1.1272, 2.0650)--(1.1279, 2.0620); 

\draw(1.1287, 2.0624)--(1.2241, 2.1653) (1.2354, 2.1702)--(1.2492,2.1177); 
\draw(1.2242, 2.1653)--(1.2250, 2.1662) (1.2250, 2.1662)--(1.2251, 2.1657); 
\draw(1.2251, 2.1657)--(1.2331, 2.1743) (1.2332, 2.1744)--(1.2344, 2.1697); 

\draw(1.2495, 2.1178)--(1.2516, 2.1201) (1.2516, 2.1201)--(1.2520, 2.1189); 

\draw(1.2520, 2.1189)--(1.2714, 2.1403) (1.2719, 2.1406)--(1.2755, 2.1294); 

\draw(1.2756, 2.1294)--(1.2797, 2.1341) (1.2798, 2.1341)--(1.2806, 2.1316); 

\draw(1.2820, 2.1322)--(1.4681, 2.3402) (1.5166, 2.3596)--(1.5550, 2.2470); %
\draw(1.4684, 2.3403)--(1.4693, 2.3413) (1.4693, 2.3413)--(1.4695, 2.3407); 
\draw(1.4695, 2.3407)--(1.4746, 2.3464) (1.4746, 2.3464)--(1.4755, 2.3432); 
\draw(1.4757, 2.3432)--(1.5031, 2.3736) (1.5042, 2.3741)--(1.5093, 2.3567); 
\draw(1.5031, 2.3737)--(1.5040, 2.3746) (1.5040, 2.3746)--(1.5041, 2.3740); 
\draw(1.5095, 2.3568)--(1.5143, 2.3622) (1.5143, 2.3622)--(1.5153, 2.3591); 
\draw(1.5153, 2.3591)--(1.5162, 2.3601) (1.5162, 2.3601)--(1.5163, 2.3595); 

\draw(1.5559, 2.2474)--(1.5589, 2.2507) (1.5589, 2.2507)--(1.5597, 2.2488); 

\draw(1.5597, 2.2489)--(1.5746, 2.2655) (1.5750, 2.2656)--(1.5790, 2.2556); 

\draw(1.5805, 2.2572)--(1.6531, 2.3391) (1.6607, 2.3420)--(1.6812, 2.2966); 
\draw(1.6537, 2.3393)--(1.6587, 2.3450) (1.6588, 2.3450)--(1.6602, 2.3418); 

\draw(1.6815, 2.2967)--(1.6840, 2.2996) (1.6840, 2.2996)--(1.6848, 2.2979); 

\draw(1.6849, 2.2980)--(1.7142, 2.3320) (1.7155, 2.3325)--(1.7252, 2.3135); 
\draw(1.7142, 2.3320)--(1.7152, 2.3331) (1.7152, 2.3331)--(1.7155, 2.3325); 

\draw(1.7258, 2.3137)--(1.7379, 2.3280) (1.7381, 2.3281)--(1.7425, 2.3200); 

\draw(1.7426, 2.3201)--(1.7476, 2.3262) (1.7477, 2.3262)--(1.7496, 2.3227); 

\draw(1.7496, 2.3227)--(1.7518, 2.3254) (1.7518, 2.3254)--(1.7526, 2.3239); 

\draw(1.7526, 2.3239)--(1.7535, 2.3250) (1.7535, 2.3250)--(1.7539, 2.3244); 

\draw[domain=1.7549:2] plot (\x, {(\x+1+sqrt(\x^2+2*\x-3))/2}); 
\end{scope}

\draw[dashed](1,2)node[below]{$1$}--(2,2)node[right]{$2$}--(2,2.65);
\draw[dashed,domain=1:2] plot (\x, {1+sqrt(\x)}) node[right]{\small$1\!+\!\sqrt{m}$};
\node[below] at (2,2){$2$}; 
\node[right] at (2,2.618){$\frac{3+\sqrt{5}}{2}$};

\fill(2,2.53595) circle(.1pt);
\node[right] at (2,2.53595){$\mathcal{K}(2)$};
\end{tikzpicture}
\caption{The critical bases $\mathcal{G}(m)$ (below $1+\sqrt{m}$, blue) and $\mathcal{L}(m)$ (above $1+\sqrt{m}$, red).} \label{f:GL}
\end{figure}

By \cite{KomornikLaiPedicini11,KomornikPedicini17}, we have, for all $m \in (1,2]$, 
\[
2 \le \mathcal{G}(m) \le 1+\sqrt{m} \le \mathcal{K}(m) \le \mathcal{L}(m) \le g_{1\overline{0}}(m) = 1+m,
\]
with $\mathcal{G}(m) = \mathcal{L}(m)$ if and only if $m \in \{\mu_{\sigma(1\overline{0})}, \mu_{\sigma(0\overline{1})}\}$, $\sigma \in \{L,R\}^* M$, or $m = \mu_\mathbf{u}$, $\mathbf{u} \in \mathcal{S}_{\{L,R\}}$.
Besides those~$m$, the value of $\mathcal{K}(m)$ is known only for $m = 2$ from \cite{deVriesKomornik09,AlloucheFrougny09,KomornikPedicini17}, with $\mathcal{K}(2) \approx 2.536 < \frac{3+\sqrt{5}}{2} = \mathcal{L}(2)$. 
The functions $\mathcal{G}(m)$, $\mathcal{K}(m)$ and $\mathcal{L}(m)$ are continuous for $m > 1$ by~\cite{KomornikLaiPedicini11,KomornikPedicini17}; at least for the generalised golden ratio, this also holds for larger alphabets by \cite{BakerSteiner17}. 

\section{Proof of the main theorem}\label{sec:proof-main-theorem}
We first establish relations between $f_\mathbf{u}(m)$, $g_\mathbf{u}(m)$ and $\mathbf{u} \in U_\beta(m)$.
For convenience, we write $\inf(\mathbf{u})$ for $\inf O(\mathbf{u})$ and $\sup(\mathbf{u})$ for $\sup O(\mathbf{u})$ in the following. 

\begin{lemma} \label{l:U}
Let $m \in (1,2]$, $\beta \in (1,1+m]$.
For $\mathbf{u} \in \{0,1\}^\infty$, we have $\mathbf{u} \in U_\beta(m)$ if and only if  $0\mathbf{u} \in U_\beta(m)$.
For $\mathbf{u} \in 1\{0,1\}^\infty \setminus \{1\overline{0}\}$, $\mathbf{u} \in U_\beta(m)$ implies that $\beta \ge \max(f_\mathbf{u}(m), g_\mathbf{u}(m))$, and $\beta > \max(f_\mathbf{u}(m), g_\mathbf{u}(m))$ implies that $\mathbf{u} \in U_\beta(m)$.
\end{lemma}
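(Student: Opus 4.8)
\emph{Strategy.} The plan is to reduce membership in $U_\beta(m)$ to the classical lexicographic criterion for $\{0,1,m\}$-expansions and then to translate that criterion into conditions on $f_{\mathbf{u}}$ and $g_{\mathbf{u}}$ through $\sup O(\mathbf{u})$ and $\inf O(\mathbf{u})$. Write $T^k\mathbf{u}=u_{k+1}u_{k+2}\cdots$ for the $k$-th shift. The first step is to record that, for $\beta\in(1,1+m]$ and $m\in(1,2]$, the map $\pi_\beta$ sends $\{0,1,m\}^\infty$ onto the whole interval $[0,\frac{m}{\beta-1}]$: the two gaps of the alphabet have lengths $1$ and $m-1$, both $\le\frac{m}{\beta-1}$, and the single nontrivial point (being able to use the digit $m$) holds because $m\le2$. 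Consequently a second expansion first differing from $\mathbf{u}$ at position $k$ exists precisely when one may replace $u_k$ by the adjacent smaller or larger alphabet letter and absorb the discrepancy into the tail, and since the tail value ranges over all of $[0,\frac{m}{\beta-1}]$ this reads off as: $\mathbf{u}\in U_\beta(m)$ iff, for every $k$, (i) $\pi_\beta(T^k\mathbf{u})<1$ whenever $u_k=0$; (ii) $\pi_\beta(T^k\mathbf{u})<m-1$ whenever $u_k=1$; (iii) $\pi_\beta(T^k\mathbf{u})>\frac{m}{\beta-1}-1$ whenever $u_k=1$. Everything below works with (i)--(iii).

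\emph{Leading zero.} A second expansion $\mathbf{v}$ of $0\mathbf{u}$ has $v_1\in\{0,1,m\}$; if $v_1=0$ then $T\mathbf{v}$ is a second expansion of $\mathbf{u}$, and prepending $0$ to any second expansion of $\mathbf{u}$ is a second expansion of $0\mathbf{u}$, so the contribution of $v_1=0$ is exactly ``$\mathbf{u}\notin U_\beta(m)$''; if $v_1\ge1$, such a $\mathbf{v}$ exists (take $v_1=1$ and $T\mathbf{v}$ any sequence with $\pi_\beta(T\mathbf{v})=\pi_\beta(\mathbf{u})-1$, by surjectivity) iff $\pi_\beta(\mathbf{u})\ge1$. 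So I must show $\pi_\beta(\mathbf{u})\ge1\Rightarrow\mathbf{u}\notin U_\beta(m)$ for $\mathbf{u}\in\{0,1\}^\infty$. For $\beta>2$ this is vacuous ($\pi_\beta(\mathbf{u})\le\frac1{\beta-1}<1$); otherwise $\mathbf{u}\ne\overline0$, and at the first $1$ of $\mathbf{u}$, say at position $j$, one produces a second expansion by changing that digit to $0$ when $\beta\le m$ (the new tail $\beta\,\pi_\beta(T^{j-1}\mathbf{u})\le\frac{\beta}{\beta-1}\le\frac{m}{\beta-1}$ is admissible) and to $m$ when $\beta\in(m,2]$ (here $\pi_\beta(T^{j-1}\mathbf{u})=\beta^{\,j-1}\pi_\beta(\mathbf{u})\ge1$ forces $\pi_\beta(T^{j}\mathbf{u})\ge\beta-1>m-1$, so the new tail is nonnegative). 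Hence $\mathbf{u}\in U_\beta(m)\iff0\mathbf{u}\in U_\beta(m)$.

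\emph{The two bounds.} Now fix $\mathbf{u}\in1\{0,1\}^\infty\setminus\{1\overline0\}$. If $\inf O(\mathbf{u})=\overline0$ (in particular if $\mathbf{u}$ has only finitely many ones) then $g_{\mathbf{u}}(m)=1+m\ge\beta$, so ``$\beta>\max(f_{\mathbf{u}},g_{\mathbf{u}})$'' cannot occur; and the infimum of $\pi_\beta(T^k\mathbf{u})$ over $\{k:u_k=1\}$ is $0$, so (iii) already fails unless $\beta=1+m$, which settles this case. Assume henceforth $\inf O(\mathbf{u})\ne\overline0$. The key technical ingredient is the \emph{order-compatibility lemma}: if $\pi_\beta(T^k\mathbf{u})<1$ for all $k\ge1$, then $\pi_\beta$ is nondecreasing for the lexicographic order on the closure $\overline{O(\mathbf{u})}$ of the orbit (if two elements first differ at position $i$, the larger has a $1$ there and a short computation, using that the hypothesis extends to all shifts of all elements of $\overline{O(\mathbf{u})}$, shows its $\pi_\beta$-value is at least the other's). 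Granting this, and using $u_1=1$, one obtains the identities $\sup\{\pi_\beta(T^k\mathbf{u}):u_k=1\}=\pi_\beta(T\sup O(\mathbf{u}))$ and $\inf\{\pi_\beta(T^k\mathbf{u}):u_k=1\}=\pi_\beta(\inf O(\mathbf{u}))$ --- for the second, note that the longest runs of zeros in $\mathbf{u}$ are always preceded by a one, so the tails following a one are cofinal from below with all shifts of $\mathbf{u}$. For the forward implication, $\mathbf{u}\in U_\beta(m)$ gives (i)--(iii), hence the hypothesis of the compatibility lemma (as $m-1\le1$); then (ii) becomes $\pi_\beta(T\sup O(\mathbf{u}))\le m-1=\pi_{f_{\mathbf{u}}(m)}(T\sup O(\mathbf{u}))$, whence $\beta\ge f_{\mathbf{u}}(m)$, and (iii) becomes $(\beta-1)\bigl(1+\pi_\beta(\inf O(\mathbf{u}))\bigr)\ge m$, whence $\beta\ge g_{\mathbf{u}}(m)$ by the monotonicity established in Lemma~\ref{l:fg}.

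\emph{Converse, and the hard part.} Suppose $\beta>\max(f_{\mathbf{u}}(m),g_{\mathbf{u}}(m))$. From $1+\pi_g(\inf O(\mathbf{u}))\le\frac{g}{g-1}$ one gets $g_{\mathbf{u}}(m)\ge m$, so $\beta>m$; this already makes (ii) imply (i), since a tail following a $0$ equals $\beta^{-j}\bigl(1+\pi_\beta(T^{k+j}\mathbf{u})\bigr)<\beta^{-j}m\le\frac{m}{\beta}<1$, where $k+j$ is the next position carrying a $1$. Thus it remains to verify (ii) and (iii), and given the compatibility lemma these follow from $\beta>f_{\mathbf{u}}(m)$ and $\beta>g_{\mathbf{u}}(m)$ exactly as in the forward direction. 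The one point that is not routine --- and in my view the main obstacle --- is to break the apparent circularity and derive the hypothesis ``$\pi_\beta(T^k\mathbf{u})<1$ for all $k\ge1$'' of the compatibility lemma from $\beta>\max(f_{\mathbf{u}}(m),g_{\mathbf{u}}(m))$ alone. I would do this by an extremal argument: put $C=\max_{\mathbf{z}\in\overline{O(\mathbf{u})}}\pi_\beta(\mathbf{z})$, attained at some $\mathbf{z}^\ast$, which must start with $1$ (else $\mathbf{z}^\ast=\overline0$ and $C=0$); comparing $\mathbf{z}^\ast$ with $\sup O(\mathbf{u})$ lexicographically, the case $\mathbf{z}^\ast=\sup O(\mathbf{u})$ gives $C=\beta^{-1}\bigl(1+\pi_\beta(T\sup O(\mathbf{u}))\bigr)<\frac m\beta<1$ by $\beta>f_{\mathbf{u}}(m)$ and $\beta>m$, while $\mathbf{z}^\ast\prec\sup O(\mathbf{u})$ would force $C\ge1$; the core of the proof is to exclude the latter, which amounts to showing that $\beta>f_{\mathbf{u}}(m)$ controls the length $r$ of the longest run of ones in $\mathbf{u}$ well enough that the no-factor bound $\pi_\beta(\mathbf{z})\le\pi_\beta(\overline{1^r0})$ stays below $1$ (the case $r=\infty$, i.e.\ $\sup O(\mathbf{u})=\overline1$, being easy since then $\beta>2$). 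Once $C<1$ is in hand, (i)--(iii) hold and $\mathbf{u}\in U_\beta(m)$; stripping or prepending leading zeros then yields the description of all of $U_\beta(m)\cap\{0,1\}^\infty$ used in the sequel.
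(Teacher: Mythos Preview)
Your treatment of the leading-zero equivalence and of the forward implication is correct, if more hands-on than necessary. The genuine problem is in the converse, precisely where you flag it: the ``extremal argument'' does not close. With $r$ the maximal run of ones in~$\mathbf{u}$, the no-factor bound $\pi_\beta(\mathbf{z})\le\pi_\beta(\overline{1^r0})$ is indeed valid on $\overline{O(\mathbf{u})}$, but $\pi_\beta(\overline{1^r0})<1$ is equivalent to $\beta+\beta^{-(r+1)}>2$, whereas from $\beta>f_\mathbf{u}(m)$ and $m\le2$ one only extracts (using $\sup O(\mathbf{u})\ge1^r\overline{0}$) the weaker $\beta+\beta^{-(r-1)}>2$. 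For $r=1$ this gives merely $\beta>1$; e.g.\ for $\mathbf{u}=\overline{10}$ one has $f_\mathbf{u}(m)=\sqrt{m/(m-1)}<2$ for every $m\in(1,2]$, yet $\pi_\beta(\overline{10})<1$ fails for all $\beta<\tfrac{1+\sqrt5}{2}$. So $\beta>f_\mathbf{u}(m)$ alone cannot ``control $r$ well enough'', and your outline leaves a gap.

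The repair is much simpler than the route you sketch, and it is exactly what the paper does: Lemma~\ref{l:fg} (which you already invoke for monotonicity) also gives $\max(f_\mathbf{u}(m),g_\mathbf{u}(m))\ge2$, so under the hypothesis $\beta>\max(f_\mathbf{u},g_\mathbf{u})$ one has $\beta>2$ outright, whence every $\{0,1\}$-tail has $\pi_\beta<\tfrac1{\beta-1}<1$ and the compatibility hypothesis holds trivially---no extremal argument needed. The paper streamlines the whole proof by establishing $\beta>2$ at the very start: for $\beta\le2$ the ``safe zone'' $(\tfrac{m}{\beta(\beta-1)},\tfrac{m}{\beta})$ for the digit~$1$ is empty, so no nontrivial $\{0,1\}$-sequence lies in $U_\beta(m)$. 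Once $\beta>2$ is in hand, lexicographic and numerical orders agree globally on $\{0,1\}^\infty$, the condition at $u_k=0$ becomes automatic, and both directions reduce to the single pair of inequalities $\beta\,\pi_\beta(\sup(\mathbf{u}))\le m\le(\beta-1)(1+\pi_\beta(\inf(\mathbf{u})))$.
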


\begin{proof}
For $\beta \in (1,1+m]$, $\mathbf{u} = u_1 u_2\cdots \in \{0,1,m\}^\infty$, $x \in [0,\frac{m}{\beta-1}]$, we have $\pi_\beta(\mathbf{u}) = x$ if and only if $u_k = d(T^{k-1}(x))$ for all $k \ge 1$, with the branching $\beta$-transformation
\[
T:\, [0,\tfrac{m}{\beta-1}] \to [0,\tfrac{m}{\beta-1}], \ x \mapsto \beta x - d(x), \ d(x) = \left\{\hspace{-.5em}\begin{array}{cl}0 & \hspace{-.5em} \mbox{if}\ x < \frac{1}{\beta}, \\ 0\ \mbox{or}\ 1 & \hspace{-.5em} \mbox{if}\ \frac{1}{\beta} \le x \le \frac{m}{\beta(\beta-1)}, \\ 1 & \hspace{-.5em} \mbox{if}\ \frac{m}{\beta(\beta-1)} < x < \frac{m}{\beta}, \\ 1\ \mbox{or}\ m & \hspace{-.5em} \mbox{if}\ \frac{m}{\beta} \le x \le \frac{1}{\beta} + \frac{m}{\beta(\beta-1)}, \\ m & \hspace{-.5em} \mbox{if}\ x > \frac{1}{\beta} + \frac{m}{\beta(\beta-1)},\end{array}\right.
\]
see Figure~\ref{f:T}.
We have thus 
\[
\mathbf{u} \in U_\beta(m) \ \Longleftrightarrow\ \pi_\beta(u_k u_{k+1}\cdots) \notin [\tfrac{1}{\beta},\tfrac{m}{\beta(\beta-1}]  \cup [\tfrac{m}{\beta}, \tfrac{1}{\beta}+\tfrac{m}{\beta(\beta-1)}]\ \mbox{for all}\ k \ge 1.
\]
For $\mathbf{u} \in \{0,1\}^\infty \setminus \{\overline{0}\}$, this means that $\beta > 2$ and 
\[
\pi_\beta(u_k u_{k+1}\cdots) < \tfrac{m}{\beta},\ \pi_\beta(u_{k+1} u_{k+2} \cdots) > \tfrac{m}{\beta-1} - 1\ \mbox{for all $k \ge 1$ such that $u_k = 1$}, 
\]
see \cite[Lemma~3.9]{BakerSteiner17}, i.e., 
\begin{gather*}
\beta\, \pi_\beta(\sup(\mathbf{u})) \le m \le (\beta-1)(1+\pi_\beta(\inf\nolimits_1(\mathbf{u}))), \\
\inf\nolimits_1(u_1u_2\cdots) = \inf\{u_{k+1}u_{k+2}\cdots:\, k\ge 1, u_k=1\},
\end{gather*}
with strict equalities if the supremum and infimum are attained.
In particular, we have $\mathbf{u} \in U_\beta(m)$ if and only if $0\mathbf{u} \in U_\beta(m)$.
If $\mathbf{u}$ starts with~$1$, then $\inf_1(\mathbf{u}) = \inf(\mathbf{u})$, and the first lines of Lemma~\ref{l:fg} below conclude the proof of the lemma.
\end{proof}

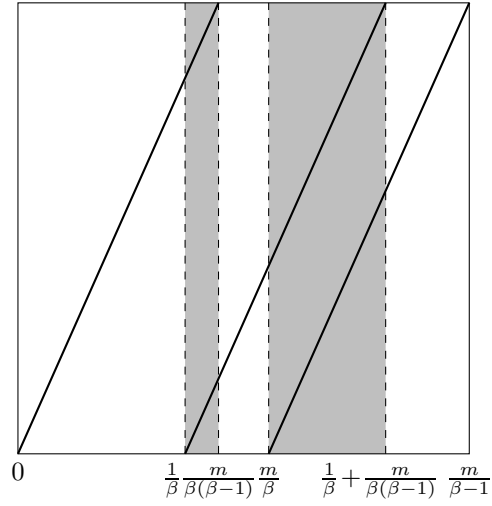
\begin{figure} \label{f:T}
\begin{tikzpicture}[scale=5]
\fill[black!25](.4444,0)--(.5333,0)--(.5333,1.2)--(.4444,1.2) (.6666,0)--(.9777,0)--(.9777,1.2)--(.6666,1.2);
\draw[dashed](.4444,0)--(.4444,1.2) (.5333,0)--(.5333,1.2) (.6666,0)--(.6666,1.2) (.9777,0)--(.9777,1.2);
\draw(0,0)node[below]{$0$}--(1.2,0)node[below]{$\frac{m}{\beta-1}\vphantom{\frac{1}{\beta}}$}--(1.2,1.2)--(0,1.2)--cycle;
\draw[thick](0,0)--(.5333,1.2) (.4444,0)--(.9777,1.2) (.6666,0)--(1.2,1.2);
\node[below] at (.41,0){$\frac{1}{\beta}$};
\node[below] at (.5333,0){$\frac{m}{\beta(\beta-1)}\vphantom{\frac{1}{\beta}}$};
\node[below] at (.6666,0){$\frac{m}{\beta}\vphantom{\frac{1}{\beta}}$};
\node[below] at (.96,0){$\frac{1}{\beta}\!+\!\frac{m}{\beta(\beta-1)}$};
\end{tikzpicture}
\caption{The branching $\beta$-transformation $T$ for $\beta = 9/4$, $m=3/2$.}
\end{figure}

\begin{lemma} \label{l:fg}
Let $m \in (1,2]$, $\mathbf{u}, \mathbf{u}' \in \{0,1\}^\infty$.
Then $g_\mathbf{u}(m)$ is well defined.
If $\mathbf{u}$ contains at least two ones, then $f_\mathbf{u}(m)$ and $\mu_\mathbf{u}$ are well defined, and we have
\begin{gather*}
\max(f_\mathbf{u}(m), g_\mathbf{u}(m)) \ge 2, \\
\beta\, \pi_\beta(\sup(\mathbf{u})) < m \quad \mbox{for all}\ \beta > f_\mathbf{u}(m), \\
(\beta-1)(1+\pi_\beta(\inf(\mathbf{u})) > m \quad \mbox{for all}\ \beta > g_\mathbf{u}(m), \\
f_\mathbf{u}(m) > f_\mathbf{u}(m') \quad \mbox{and} \quad g_\mathbf{u}(m) < g_\mathbf{u}(m') \quad \mbox{if}\ m < m', \\
f_\mathbf{u}(m) < f_{\mathbf{u}'}(m) \quad \mbox{if}\ \sup(\mathbf{u}) < \sup(\mathbf{u}')\ \mbox{and}\ f_\mathbf{u}(m) \ge 2, \\
g_\mathbf{u}(m) > g_{\mathbf{u}'}(m) \quad \mbox{if}\ \inf(\mathbf{u}) < \inf(\mathbf{u}')\ \mbox{and}\ g_{\mathbf{u}'}(m) \ge 2.
\end{gather*}
\end{lemma}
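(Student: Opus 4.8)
The plan is to reduce everything to elementary properties of the two functions
$F_\mathbf{w}(\beta)=\beta\,\pi_\beta(\mathbf{w})=1+\sum_{k\ge2}w_k\beta^{1-k}$ and $G_\mathbf{w}(\beta)=(\beta-1)(1+\pi_\beta(\mathbf{w}))$ attached to a word $\mathbf{w}=w_1w_2\cdots$, since $f_\mathbf{u}(m)$ is by definition a positive solution of $F_{\sup(\mathbf{u})}(\beta)=m$ and $g_\mathbf{u}(m)$ of $G_{\inf(\mathbf{u})}(\beta)=m$. First I would note that $\mathbf{u}$ contains at least two ones if and only if $\sup(\mathbf{u})$ begins with~$1$ and is not $1\overline0$ (if $\mathbf{u}$ has at most one~$1$, then $\sup(\mathbf{u})\in\{\overline0,1\overline0\}$), and in that case $F_{\sup(\mathbf{u})}$ is continuous and strictly decreasing on $(0,\infty)$ if $\sup(\mathbf{u})$ has finitely many ones and on $(1,\infty)$ otherwise, with value or limit $\ge2$ at the left end and limit~$1$ at~$+\infty$. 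This at once gives that $f_\mathbf{u}(m)$ exists, is the unique positive solution, is a strictly decreasing function of~$m$ (it is the inverse of a strictly decreasing bijection), and that $\beta\,\pi_\beta(\sup(\mathbf{u}))=F_{\sup(\mathbf{u})}(\beta)<m$ for every $\beta>f_\mathbf{u}(m)$.

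For $G_\mathbf{w}$ the function itself need not be monotone, so I would pass to the reflected word $\overline{\mathbf{w}}$ (swap $0\leftrightarrow1$) via $\pi_\beta(\mathbf{w})+\pi_\beta(\overline{\mathbf{w}})=\pi_\beta(\overline1)=\tfrac1{\beta-1}$. For $\beta>1$ this rewrites $G_\mathbf{w}(\beta)=m$ equivalently as $\pi_\beta(\overline{\mathbf{w}})=\tfrac{\beta-m}{\beta-1}$, whose right-hand side is continuous and strictly increasing in~$\beta$ (derivative $\tfrac{m-1}{(\beta-1)^2}>0$) while the left-hand side is continuous and non-increasing; hence the difference $\pi_\beta(\overline{\mathbf{w}})-\tfrac{\beta-m}{\beta-1}$ is strictly decreasing, tends to $+\infty$ at $\beta\to1^+$ and to $-1$ at $+\infty$, so it has exactly one zero in $(1,\infty)$, and $G_\mathbf{w}(\beta)\le0<m$ for $\beta\in(0,1]$. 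Thus $g_\mathbf{u}(m)$ exists and is the unique positive solution; since the difference is also strictly increasing in~$m$, $g_\mathbf{u}(m)$ is strictly increasing in~$m$, and the sign of the difference for $\beta>g_\mathbf{u}(m)$ yields $(\beta-1)(1+\pi_\beta(\inf(\mathbf{u})))>m$ there, with the reverse inequality on $(1,g_\mathbf{u}(m))$. The bound $\pi_\beta(\overline{\mathbf{w}})\le\tfrac1{\beta-1}$ used in the equation also gives $g_\mathbf{u}(m)\le m+1$.

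Next, $\max(f_\mathbf{u}(m),g_\mathbf{u}(m))\ge2$: if $f_\mathbf{u}(m)<2$ then, writing $\sup(\mathbf{u})=1\mathbf{v}$, one has $1+\pi_2(\mathbf{v})=2\,\pi_2(\sup(\mathbf{u}))=F_{\sup(\mathbf{u})}(2)<m$; since $\mathbf{v}$ lies in the (shift-invariant, closed) orbit closure of~$\mathbf{u}$, $\mathbf{v}\ge\inf(\mathbf{u})$, and as $\pi_2$ is non-decreasing for the lexicographic order, $G_{\inf(\mathbf{u})}(2)=1+\pi_2(\inf(\mathbf{u}))\le1+\pi_2(\mathbf{v})<m$, whence $g_\mathbf{u}(m)>2$ by the previous paragraph. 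For the word-comparison inequalities I would fix $\beta_0=f_\mathbf{u}(m)\ge2$ (resp.\ $\beta_0=g_{\mathbf{u}'}(m)\ge2$): it suffices to show $\pi_{\beta_0}(\sup(\mathbf{u}))<\pi_{\beta_0}(\sup(\mathbf{u}'))$ (resp.\ $\pi_{\beta_0}(\overline{\inf(\mathbf{u})})>\pi_{\beta_0}(\overline{\inf(\mathbf{u}')})$), for then $m=F_{\sup(\mathbf{u})}(\beta_0)<F_{\sup(\mathbf{u}')}(\beta_0)$ (resp.\ the analogous inequality for~$G$) and the monotonicities above force $f_{\mathbf{u}'}(m)>f_\mathbf{u}(m)$ (resp.\ $g_\mathbf{u}(m)>g_{\mathbf{u}'}(m)$). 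Now $\pi_{\beta_0}$ is strictly increasing for the lexicographic order when $\beta_0>2$, and for $\beta_0=2$ the only pairs with equal $\pi_2$ are $\{w0\overline1,\,w1\overline0\}$; but $\sup(\mathbf{u})$ is never of the form $w0\overline1$ (its orbit closure would then contain $\overline1$, which is strictly larger), and $\inf(\mathbf{u})$ is never of the form $w1\overline0$ (its orbit closure would contain $\overline0$), so the strict inequalities hold. Finally, $\mu_\mathbf{u}$ is the unique zero of the continuous, strictly decreasing map $m\mapsto f_\mathbf{u}(m)-g_\mathbf{u}(m)$, which tends to $+\infty$ as $m\to1^+$ ($f_\mathbf{u}(m)\to+\infty$ while $g_\mathbf{u}(m)\le m+1$ stays bounded) and becomes negative as $m$ approaches the number of ones of $\sup(\mathbf{u})$ (there $f_\mathbf{u}(m)\to1$).

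The step I expect to be the real obstacle is the one for $G_\mathbf{w}$: without the reflection trick the natural route is the derivative estimate $G_\mathbf{w}'(\beta)>0$ whenever $G_\mathbf{w}(\beta)>1$ — provable by expressing $(\beta-1)\pi_\beta'(\mathbf{w})$ through the shift sums $\pi_\beta(w_jw_{j+1}\cdots)\le\tfrac1{\beta-1}$ — followed by an argument that $G_\mathbf{w}$ cannot return to level~$m$ after its first crossing; the reflection makes this clean but recognizing it is the crux. A secondary subtlety is the boundary case $\beta_0=2$ in the comparison inequalities, where $\pi_2$ is only weakly monotone and one must invoke the combinatorial facts about suprema and infima of shift orbit closures.
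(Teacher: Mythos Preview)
Your argument is sound and closely parallels the paper's. For $G_{\mathbf w}$ your reflection trick in fact produces the very same auxiliary function the paper uses: the paper divides through by $\beta-1$ and observes that $\tfrac{m}{\beta-1}-\pi_\beta(\mathbf w)-1$ equals $\pi_\beta$ of the sequence $(m-w_1)(m-w_2)\cdots$, a power series in $\beta^{-1}$ with all coefficients at least $m-1>0$, hence strictly decreasing in~$\beta$; a short computation shows your difference $\pi_\beta(\text{reflection of }\mathbf w)-\tfrac{\beta-m}{\beta-1}$ is exactly this quantity. The one genuinely different step is the bound $\max(f_{\mathbf u}(m),g_{\mathbf u}(m))\ge2$: the paper first constructs~$\mu_{\mathbf u}$, then shows that the common value $\beta=f_{\mathbf u}(\mu_{\mathbf u})=g_{\mathbf u}(\mu_{\mathbf u})$ satisfies $\beta\ge2$ via a prefix comparison of $\sup(\mathbf u)$ with $1\inf(\mathbf u)$, and finally deduces the bound for all~$m$ by monotonicity. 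Your direct argument (``$f_{\mathbf u}(m)<2\Rightarrow G_{\inf(\mathbf u)}(2)\le1+\pi_2(\mathbf v)<m\Rightarrow g_{\mathbf u}(m)>2$'') is shorter and avoids that detour. Your handling of the edge case $\beta_0=2$ in the word-comparison inequalities (suprema are never of the form $w0\overline 1$, infima never $w1\overline 0$) is also more explicit than the paper's.

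There is one loose end. Your existence argument for~$\mu_{\mathbf u}$ says $f_{\mathbf u}(m)-g_{\mathbf u}(m)$ ``becomes negative as $m$ approaches the number of ones of $\sup(\mathbf u)$''; when that number exceeds~$2$ (or is infinite) this does not establish negativity within the stipulated range $m\in(1,2]$, so you have not shown $\mu_{\mathbf u}\le2$. The fix is immediate from facts you already have: $F_{\sup(\mathbf u)}(2)=2\pi_2(\sup(\mathbf u))\le2$ gives $f_{\mathbf u}(2)\le2$, and $G_{\inf(\mathbf u)}(2)=1+\pi_2(\inf(\mathbf u))\le2$ gives $g_{\mathbf u}(2)\ge2$, whence $f_{\mathbf u}(2)-g_{\mathbf u}(2)\le0$. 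This is precisely the endpoint estimate the paper uses.
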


\begin{proof}
Let $\sup(\mathbf{u}) = \mathbf{v}$ and set $h_\mathbf{v}(x,m) = x \pi_x(\mathbf{v}) -m$.
Then $h_\mathbf{v}(x,m)$ is strictly decreasing in $x$ and~$m$ (for $x > 1$).
If $\mathbf{u}$ contains at least two ones, then this also holds for~$\mathbf{v}$, thus $\lim_{x\to1}h_\mathbf{v}(x,m) \ge 2-m$ and $\lim_{x\to\infty} h_\mathbf{v}(x,m) = 1-m$.
Therefore, there is, for each $m \in (1,2]$, a unique $x_{m,\mathbf{v}} \ge 1$ such that $h_\mathbf{v}(x_{m,\mathbf{v}},m) = 0$, i.e., $f_\mathbf{u}(m) = x_{m,\mathbf{v}}$.
If $m < m'$, then we have $x_{m,\mathbf{v}} > x_{m',\mathbf{v}}$. 
If $\mathbf{v} < \mathbf{v}'$ and $x \ge 2$, then we have $h_\mathbf{v}(x,m) < h_{\mathbf{v}'}(x,m)$, thus $x_{m,\mathbf{v}} < x_{m,\mathbf{v'}}$ if $x_{m,\mathbf{v}} \ge 2$.

Let now $\inf(\mathbf{u}) = \mathbf{v}$ and set $h_\mathbf{v}(x,m) = \frac{m}{x-1} - \pi_x(\mathbf{v}) - 1$.
Since $\frac{m}{x-1} = \pi_x(\overline{m})$, $h_\mathbf{v}(x,m)$ is strictly decreasing in $x$ (for $x > 1$) and strictly increasing in~$m$.
Again, there is, for each $m \in (1,2]$, a unique $x_{m,\mathbf{v}} > 1$ such that $h_\mathbf{v}(x_{m,\mathbf{v}},m) = 0$, i.e., $g_\mathbf{u}(m) = x_{m,\mathbf{v}}$.
We have $h_\mathbf{v}(x,m) < 0$ if $x > x_{m,\mathbf{v}}$, $x_{m,\mathbf{v}} < x_{m',\mathbf{v}}$ if $m < m'$, and $h_\mathbf{v}(x,m) > h_{\mathbf{v}'}(x,m)$ if $\mathbf{v} < \mathbf{v}'$, $x \ge 2$, thus $x_{m,\mathbf{v}} > x_{m,\mathbf{v}'}$ if $x_{m,\mathbf{v}'} \ge 2$.

Since $f_\mathbf{u}(m)$ is strictly decreasing, $g_\mathbf{u}(m)$ is strictly increasing, $\lim_{m\to1} f_\mathbf{u}(m) = \infty$, $f_\mathbf{u}(2) \le 2$ and $g_\mathbf{u}(2) \ge 2$, we have $f_\mathbf{u}(m) = g_\mathbf{u}(m)$ for a unique $m \in (1,2]$. 

Let $\beta = f_\mathbf{u}(\mu_\mathbf{u}) = g_\mathbf{u}(\mu_\mathbf{u})$, i.e., $\beta\, \pi_\beta(\sup(\mathbf{u})) = (\beta-1)(1+\pi_\beta(\inf(\mathbf{u})))$.
We have $\sup(\mathbf{u}) \ge 1\inf(\mathbf{u})$. 
If equality holds, then $\beta = 2$.
Otherwise, $\sup(\mathbf{u})$ starts with $1v_1\cdots v_{k-1}1$ and $\inf(\mathbf{u})$ starts with $v_1\cdots v_{k-1}0$ for some $v_1\cdots v_{k-1}$, $k\ge 1$.
Then 
\[
\beta \pi_\beta(\sup(\mathbf{u})) \ge 1 + \sum_{i=1}^{k-1} \frac{v_i}{\beta^i} + \frac{1}{\beta^k},\ (\beta-1)(1+\pi_\beta(\inf(\mathbf{u}))) \le (\beta-1) \bigg(1 + \sum_{i=1}^{k-1} \frac{v_i}{\beta^i}\bigg) + \frac{1}{\beta^k},
\]
thus $\beta \ge 2$. 
By the monotonicity properties that are proved above, this implies that $\max(f_\mathbf{u}(m), g_\mathbf{u}(m)) \ge 2$ for all $m \in (1,2]$.
\end{proof}

Therefore, it is crucial to determine $\inf(\mathbf{u})$ and $\sup(\mathbf{u})$. 
We set
\[
\sup\nolimits_0(u_1u_2\cdots) = \sup\{u_{k+1}u_{k+2}\cdots:\, k\ge 1, u_k=0\},
\]
similarly to $\inf_1(u_1u_2\cdots) = \inf\{u_{k+1}u_{k+2}\cdots:\, k\ge 1, u_k=1\}$. 

\begin{lemma} \label{l:infsup}
For all $\mathbf{u} \in \{0,1\}^\infty$, we have
\[
\inf(L(\mathbf{u})) = L(\inf(\mathbf{u})), \quad \inf(R(\mathbf{u})) = R(\inf(\mathbf{u})), \quad
0\sup(L(\mathbf{u})) = L(\sup(\mathbf{u})).
\]
If $\inf(\mathbf{u}) = \inf_1(\mathbf{u})$, then $\inf(M(\mathbf{u})) = 0 M(\inf(\mathbf{u}))$.
If $\sup(\mathbf{u}) = \sup_0(\mathbf{u})$, then 
\[
\sup(R(\mathbf{u})) = 1 R(\sup(\mathbf{u})), \quad \sup(M(\mathbf{u})) = 1 M(\sup(\mathbf{u})).
\]

For each $\sigma \in \{L,M,R\}^*$, there is a suffix $w$ of $\sigma(1)$ such that $\inf_1(\sigma(\mathbf{u})) = \inf(\sigma(\mathbf{u})) = w \sigma(\inf(\mathbf{u}))$ for all $\mathbf{u} \in \{0,1\}^\infty$ with $\inf(\mathbf{u}) = \inf_1(\mathbf{u})$.

For each $\sigma \in \{L,M,R\}^*M \cup \{L,M,R\}^*R$, there is a suffix $w$ of $\sigma(0)$ such that $\sup_0(\sigma(\mathbf{u})) =\sup(\sigma(\mathbf{u})) = w \sigma(\sup(\mathbf{u}))$ for all $\mathbf{u} \in \{0,1\}^\infty$ with $\sup(\mathbf{u}) = \sup_0(\mathbf{u})$. 

For each $\sigma \in \{L,M,R\}^*L$, there is a prefix $w$ of $\sigma(\overline{0})$ such that $w \sup_0(\sigma(\mathbf{u})) = w \sup(\sigma(\mathbf{u})) = \sigma(\sup(\mathbf{u}))$ for all $\mathbf{u} \in \{0,1\}^\infty$ with $\sup(\mathbf{u}) = \sup_0(\mathbf{u})$. 
\end{lemma}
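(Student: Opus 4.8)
\emph{Strategy and preliminaries.}
The plan is to reduce everything to one application of $L$, $M$ or $R$ and then induct on the length of~$\sigma$. First I would note that each $\rho\in\{L,M,R\}$ is non-erasing, hence continuous on $\{0,1\}^\infty$, and strictly increasing for the lexicographic order: if $\mathbf w<\mathbf w'$ first differ at position~$j$, then $\rho(\mathbf w)$ and $\rho(\mathbf w')$ share the prefix $\rho(w_1\cdots w_{j-1})$ and then $\rho(\mathbf w)$ becomes smaller inside the blocks $\rho(0)$ versus $\rho(1)$ (at the first letter for $M$ and $R$, at the second letter for~$L$, using that every image begins with~$0$). Hence $\rho(\inf X)=\inf\rho(X)$ and $\rho(\sup X)=\sup\rho(X)$ for every $X\subseteq\{0,1\}^\infty$ and every $\sigma\in\{L,M,R\}^*$. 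Cutting $\rho(\mathbf u)$ into its blocks $\rho(u_k)$ and writing $O_0(\mathbf u)=\{u_{k+1}u_{k+2}\cdots:\,u_k=0\}$ and $O_1(\mathbf u)$ analogously (so $\inf_1=\inf O_1$, $\sup_0=\sup O_0$), the only proper nonempty suffixes of images being $1$ (of $L(1)=M(0)=R(0)=01$) and $0$ (of $M(1)=10$), one obtains
\[
O(L(\mathbf u))=L(O(\mathbf u))\cup 1\,L(O_1(\mathbf u)),\qquad O(R(\mathbf u))=R(O(\mathbf u))\cup 1\,R(O_0(\mathbf u)),
\]
\[
O(M(\mathbf u))=M(O(\mathbf u))\cup 1\,M(O_0(\mathbf u))\cup 0\,M(O_1(\mathbf u)).
\]

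\emph{The one-step identities.}
The first four assertions follow by comparing leading letters. Since $L(\mathbf w)$ always begins with~$0$ whereas $1\,L(O_1(\mathbf u))\subseteq 1\{0,1\}^\infty$, the $L$-decomposition gives $\inf L(\mathbf u)=L(\inf\mathbf u)$ at once, and, writing $\sup\mathbf u=1\,\sup O_1(\mathbf u)$ when $\mathbf u$ contains a one, $0\,\sup L(\mathbf u)=L(\sup\mathbf u)$ ($\mathbf u=\overline0$ being trivial). For~$R$: if $\mathbf u$ contains a zero then $\inf\mathbf u$, hence $R(\inf\mathbf u)$, begins with~$0$ and beats $1\,R(O_0(\mathbf u))$, so $\inf R(\mathbf u)=R(\inf\mathbf u)$ (both sides $\overline1$ if $\mathbf u=\overline1$). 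For the $M$-assertions the hypotheses enter: if $\inf\mathbf u=\inf_1\mathbf u$, then $0M(\inf\mathbf u)=0M(\inf_1\mathbf u)$ lies in the closure of $0\,M(O_1(\mathbf u))\subseteq O(M(\mathbf u))$, begins with~$00$, and undercuts both $M(\inf\mathbf u)$ (which begins with~$01$; if $\mathbf u=\overline1$ check directly) and $1\,M(O_0(\mathbf u))$; dually, if $\sup\mathbf u=\sup_0\mathbf u$ then $1M(\sup\mathbf u)$, resp.\ $1R(\sup\mathbf u)$, lies in the closure of $1\,M(O_0(\mathbf u))$, resp.\ $1\,R(O_0(\mathbf u))$, begins with~$11$, and is an upper bound for the orbit because every other suffix begins with at most~$10$. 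Along the way I would record the two \emph{propagation facts}: for each $\rho\in\{L,M,R\}$, $\inf\rho(\mathbf v)=\inf_1\rho(\mathbf v)$ whenever $\inf\mathbf v=\inf_1\mathbf v$, and $\sup\rho(\mathbf v)=\sup_0\rho(\mathbf v)$ whenever $\sup\mathbf v=\sup_0\mathbf v$; in each case the relevant extremal suffix of $\rho(\mathbf v)$ occurs right after a block boundary that is preceded by a~$1$, resp.\ a~$0$.

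\emph{The iterated assertions.}
I would prove the last three paragraphs together by induction on the length~$n$ of $\sigma=\sigma_1\cdots\sigma_n$, peeling off the leftmost substitution $\rho=\sigma_1$ and applying the induction hypothesis to $\tau=\sigma_2\cdots\sigma_n$ and the word $\mathbf v=\tau(\mathbf u)$; by induction $\mathbf v$ still satisfies $\inf\mathbf v=\inf_1\mathbf v$ (resp.\ $\sup\mathbf v=\sup_0\mathbf v$), and for $n\ge2$ the word $\tau$ has the same final letter as~$\sigma$ (which matters for the $\sup$-assertions). The base cases $n=0$ (the $\inf$-assertion, with $w=\varepsilon$) and $n=1$ are the one-step identities. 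In the inductive step one plugs $\inf\mathbf v=w_\tau\,\tau(\inf\mathbf u)$ (resp.\ its $\sup$-analogue) into the one-step identity for $\rho(\mathbf v)$ and uses $\sigma(\mathbf x)=\rho(\tau(\mathbf x))$, obtaining $\inf\sigma(\mathbf u)=w_\sigma\,\sigma(\inf\mathbf u)$ (resp.\ $\sup\sigma(\mathbf u)=w_\sigma\,\sigma(\sup\mathbf u)$, or $w_\sigma\,\sup\sigma(\mathbf u)=\sigma(\sup\mathbf u)$ when $\sigma$ ends in~$L$), while the propagation facts supply the claimed identities $\inf_1\sigma(\mathbf u)=\inf\sigma(\mathbf u)$ and $\sup_0\sigma(\mathbf u)=\sup\sigma(\mathbf u)$. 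To see that $w_\sigma$ has the required shape I would strengthen the induction hypothesis by also recording the complementary factor: $\sigma(1)=p_\sigma w$ with $p_\sigma$ ending in~$1$ (for the $\inf$-assertion); $\sigma(0)=p_\sigma w$ with $p_\sigma$ ending in~$0$ and $w$ beginning with~$1$ (for the $\sup$-assertion when $\sigma$ ends in $M$ or~$R$); $\sigma(\overline0)=w\,\mathbf y_\sigma$ with $w$ ending in~$0$ (for the $\sup$-assertion when $\sigma$ ends in~$L$). These invariants survive composition with $L,M,R$ because $L$ and $R$ send a word ending in~$1$ to one ending in~$1$, while $M$ sends a word ending in~$1$ to one ending in~$0$ and conversely --- exactly what is needed to absorb the extra~$0$ that $M$ prepends to $\inf$-suffixes and the extra~$1$ it prepends to $\sup$-suffixes; the $L$- and $R$-steps only strip or leave a leading letter.

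\emph{The main obstacle} is this last piece of bookkeeping: choosing the strengthened invariant on the complementary factor --- and the letter it must end (or begin) with --- so that it is stable under all three of $L$, $M$, $R$. This is what forces the split of the $\sup$-assertion according to the final letter of~$\sigma$, and it is where the asymmetry between~$M$ (which inserts letters and hence shifts parities) and $L,R$ must be handled with care. Everything else is a comparison of the first two or three letters.
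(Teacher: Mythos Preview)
Your proposal is correct and follows essentially the same route as the paper's proof: both establish the one-step identities from the order-preserving/continuity of $L,M,R$ together with the observation that $1\inf(\mathbf u)$ (resp.\ $0\sup(\mathbf u)$) lies in the orbit closure, and then induct on the length of~$\sigma$ by peeling off the leftmost substitution. Your strengthened invariant on the complementary factor (``$p_\sigma$ ends in~$1$'', ``$p_\sigma$ ends in~$0$ and $w$ begins with~$1$'', ``prefix $w$ of $\sigma(\overline 0)$ ends in~$0$'') is exactly the paper's bookkeeping, which it phrases as tracking suffixes of the shape $1w$ of $\sigma(1)$, $01w$ of $\sigma(0)$, and prefixes $w0$ of~$\sigma(\overline 0)$; the inductive checks that these shapes survive composition with $L$, $M$, $R$ are identical in content to yours.
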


\begin{proof}
The first statements follow from the facts that $L,M,R$ are order-preserving on infinite words and that $\inf(\mathbf{u}) = \inf_1(\mathbf{u})$, $\sup(\mathbf{u}) = \sup_0(\mathbf{u})$ mean that $1 \inf(\mathbf{u})$, $0 \sup(\mathbf{u})$ are in the closure of~$O(\mathbf{u})$.

We claim that, for each $\sigma \in \{L,M,R\}^*$, there is a suffix $1w$ of $\sigma(1)$ such that $\inf_1(\sigma(\mathbf{u})) = \inf(\sigma(\mathbf{u})) = w \sigma(\inf(\mathbf{u}))$ for all $\mathbf{u} \in \{0,1\}^\infty$ with $\inf(\mathbf{u}) = \inf_1(\mathbf{u})$.
If $1w$ is a suffix of $\sigma(1)$, then $1L(w)$, $10M(w)$ and $1R(w)$ are suffixes of $L\sigma(1)$, $M\sigma(1)$ and $R\sigma(1)$ respectively.
Therefore, this claim holds for $L\sigma$, $M\sigma$ and $R\sigma$ when it holds for $\sigma$. 
Since it holds for $\sigma = \mathrm{id}$, it holds for all $\sigma \in \{L,M,R\}^*$.

Next we claim that, for each $\sigma \in \{L,M,R\}^* \{M,R\}$, there is a suffix $01w$ of $\sigma(0)$ such that $\sup_0(\sigma(\mathbf{u})) = \sup(\sigma(\mathbf{u})) = 1w \sigma(\sup(\mathbf{u}))$ for all $\mathbf{u} \in \{0,1\}^\infty$ with $\sup(\mathbf{u}) = \sup_0(\mathbf{u})$.
This holds for $\sigma \in \{M,R\}$.
If $01w$ is a suffix of $\sigma(0)$, then $01L(w)$, $01M(1w)$ and $01R(1w)$ are suffixes of $L\sigma(0)$, $M\sigma(0)$ and $R\sigma(0)$ respectively.
Therefore, this claim holds for all $\sigma \in \{L,M,R\}^* \{M,R\}$.
 
Finally we claim that, for each $\sigma \in \{L,M,R\}^* L$, there is a prefix $w0$ of $\sigma(\overline{0})$ such that $w0 \sup_0(\sigma(\mathbf{u})) = w0 \sup(\sigma(\mathbf{u})) = \sigma(\sup(\mathbf{u}))$ for all $\mathbf{u} \in \{0,1\}^\infty$ with $\sup(\mathbf{u}) = \sup_0(\mathbf{u})$.
This holds for $\sigma = L$ .
If $w0$ is a prefix of $\sigma(\overline{0})$, then $L(w0)0$, $M(w)0$ and $R(w)0$ are prefixes of $L\sigma(\overline{0})$, $M\sigma(\overline{0})$ and $R\sigma(\overline{0})$ respectively.
Therefore, this claim holds for all $\sigma \in \{L,M,R\}^*L$.
\end{proof}

Now we can prove that Theorem~\ref{t:main} gives an upper bound for~$\mathcal{L}(m)$, cf.\ Figure~\ref{f:musigma0}.

\begin{proposition} \label{p:upper}
Let $m \in (1,2]$. 
We have 
\[
\mathcal{L}(m) \le \begin{cases}
g_{\sigma(1\overline{0})}(m) & \mbox{if}\ m \ge \mu_{\sigma(1\overline{0})},\, \sigma \in \{L,M,R\}^* M, \\[.5ex]
f_{\sigma(0\overline{1})}(m) & \mbox{if}\ m \le \mu_{\sigma(0\overline{1})},\, \sigma \in \{L,M,R\}^* M, \\[.5ex]
g_{0\overline{1}}(m) & \mbox{if}\ m \ge \mu_{0\overline{1}}, \\[.5ex]
g_\mathbf{u}(m) & \mbox{if}\ m \ge \mu_\mathbf{u},\, \mathbf{u} \in \mathcal{S}_{\{L,M,R\}}, \\
f_\mathbf{u}(m) & \mbox{if}\ m \le \mu_\mathbf{u},\, \mathbf{u} \in \mathcal{S}_{\{L,M,R\}}.
\end{cases}
\]
If $\beta$ is above this bound, then the Hausdorff dimension of $\pi_\beta(U_\beta(m))$ is positive.
\end{proposition}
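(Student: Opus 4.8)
The plan is to show, for every $\beta$ strictly above the asserted bound, that $U_\beta(m)\cap\{0,1\}^\infty$ contains an uncountable set whose $\pi_\beta$-image has positive Hausdorff dimension; this gives the inequality $\mathcal L(m)\le\cdots$ (an infimum) and the final sentence of the proposition at once. First I would reduce to a finitary statement: it suffices to produce two \emph{distinct} finite words $W_0,W_1\in\{0,1\}^*$ of a common length $N$ such that $W_{a_1}W_{a_2}\cdots\in U_\beta(m)$ for every $(a_i)_{i\ge1}\in\{0,1\}^\infty$. Writing $r=\beta^{-N}$ and $c_j=\pi_\beta(W_j\overline0)$, one has $\pi_\beta(W_{a_1}W_{a_2}\cdots)=\tfrac{c_0}{1-r}+(c_1-c_0)\sum_{i\ge0}a_{i+1}r^i$, so the set of these numbers is an affine copy of the self-similar set $C_r=\{\sum_{i\ge0}b_ir^i:b_i\in\{0,1\}\}$. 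In every case the bound equals $\max(f_{\mathbf u^*}(m),g_{\mathbf u^*}(m))\ge2$ for the relevant word $\mathbf u^*$ (by Lemma~\ref{l:fg}; in the third case $\mathbf u^*=0\overline1$), so $\beta>2$; hence $\pi_\beta$ is strictly order-preserving on $\{0,1\}$-words of length $N$, so $c_0\neq c_1$, and $r<1/2$, so the two similarities defining $C_r$ satisfy the strong separation condition and $\dim_H C_r=\log2/(N\log\beta)>0$. Since distinct elements of $U_\beta(m)$ have distinct $\pi_\beta$-values, $(a_i)\mapsto\pi_\beta(W_{a_1}W_{a_2}\cdots)$ is injective; this completes the reduction. (If the construction naturally produces blocks of different lengths, I would pass to the equal-length pair $W_0W_1,\,W_1W_0$.)

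It remains to build $W_0,W_1$. By Lemma~\ref{l:U} and the characterisation in its proof, $\mathbf u\in\{0,1\}^\infty\setminus\{\overline0\}$ lies in $U_\beta(m)$ exactly when $\beta\,\pi_\beta(\sup(\mathbf u))\le m\le(\beta-1)\bigl(1+\pi_\beta(\inf_1(\mathbf u))\bigr)$, with strict inequality wherever the extremum is attained. Thus I need the entire family $W_{a_1}W_{a_2}\cdots$, together with all its shifts, to keep $\sup$ and $\inf_1$ under control. The hypothesis on $m$ together with Lemma~\ref{l:fg} identifies the asserted bound with $\max(f_{\mathbf u^*}(m),g_{\mathbf u^*}(m))$, so for $\beta$ strictly above it Lemma~\ref{l:fg} gives $\beta\,\pi_\beta(\sup(\mathbf u^*))<m$ and $(\beta-1)(1+\pi_\beta(\inf_1(\mathbf u^*)))>m$, \emph{strictly}; this yields a positive margin $\delta$ (measured in $\pi_\beta$-values), and it is this margin that makes everything work — note that the non-strict requirement $\sup(\mathbf u)\le\sup(\mathbf u^*)$, $\inf_1(\mathbf u)\ge\inf_1(\mathbf u^*)$ would force $\mathbf u$ to be eventually periodic, ruling out an uncountable family.

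To build the family I would use the $S$-adic structure of $\mathbf u^*$ together with Lemma~\ref{l:infsup}, by induction on $\sigma$: in the first three cases $\mathbf u^*$ is $\sigma$ applied to a fixed point of $L$ or $R$ (to $\overline0$ or $\overline1$), and in the last two it is a primitive $\{L,M,R\}$-adic word, truncated as $\mathbf u^*=\sigma_1\cdots\sigma_n(\mathbf u^{(n+1)})$. If a pair works for $\sigma'$, then $L$, $M$ or $R$ applied to it — prepending or appending the bounded prefix or suffix $w$ supplied by Lemma~\ref{l:infsup} — produces a pair for $L\sigma'$, $M\sigma'$, $R\sigma'$, because these substitutions are order-preserving and carry each window condition to the corresponding one a level up; primitivity ensures both letters recur so that $\sup$ and $\inf_1$ behave as in the limit. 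For the base step I would take $W_0,W_1$ differing from a long power of the period of $\mathbf u^*$ by a single controlled modification at the end of a period block, chosen compatibly with the substitutive structure (e.g.\ altering a symbol immediately preceded by a $0$, so the newly exposed shift is ``after a $0$'' and cannot lower $\inf_1$) and placed far out enough that, for a block length $n$ large depending on $\delta$, every shift of every concatenation $W_{a_1}W_{a_2}\cdots$ has $\sup$ and $\inf_1$ within $\delta$ of those of $\mathbf u^*$.

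The hard part will be precisely this last verification: controlling $\sup$ and $\inf_1$ — hence also $\sup_0$ and $\inf$ — uniformly over the whole uncountable family, and tracking how all four quantities transform under iterated substitutions, with the delicate distinctions between $\sup$ and $\sup_0$, between $\inf$ and $\inf_1$, and the prefixes/suffixes of Lemma~\ref{l:infsup}; the $S$-adic recursion of that lemma is exactly the tool for this bookkeeping. Everything else — the dimension computation, the reduction above, the identification of the bound with $\max(f_{\mathbf u^*},g_{\mathbf u^*})$ via the various $\mu$'s, and the attained/not-attained case distinction in the strict inequalities — is routine.
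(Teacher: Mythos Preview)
Your overall strategy---produce, for each $\beta$ strictly above the bound, two blocks $W_0,W_1$ with $\{W_0,W_1\}^\infty\subseteq U_\beta(m)$, then read off positive Hausdorff dimension from the self-similar IFS---matches the paper's; it too invokes Hutchinson at the end (with possibly unequal block lengths, but your equal-length reduction is fine).

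Where you diverge is in the construction of $W_0,W_1$. You propose an induction on the length of $\sigma$, lifting pairs through $L,M,R$ via Lemma~\ref{l:infsup}, with a loosely specified base step (``a single controlled modification at the end of a period block''). This can presumably be made to work, but it is considerably more intricate than necessary, and your own admission that ``the hard part will be precisely this last verification'' is apt.

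The paper bypasses both the induction and the perturbation. For the first case, write the $\sigma$ of the statement as $\tau M$ with $\tau\in\{L,M,R\}^*$ and take the explicit blocks $\tau(0(01)^h)$ and $\tau(0(01)^{h+1})$: for every $\mathbf v\in 1\{0(01)^h,0(01)^{h+1}\}^\infty$ one has $\inf(\tau(\mathbf v))\ge\inf(\tau(\overline{10(01)^{h-1}0}))$ and $\sup(\tau(\mathbf v))\le\sup(\tau(\overline{(01)^{h+1}0}))$ by a \emph{single} application of Lemma~\ref{l:infsup} to~$\tau$ (the inner sequences are periodic, so their own $\inf_1$ and $\sup_0$ are immediate). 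As $h\to\infty$ these bounds converge to $\inf(\tau M(1\overline0))$ and $\sup(\tau M(\overline0))=\sup(\tau M(1\overline0))$, so for $\beta$ strictly above $\max(f_{\tau M(1\overline0)}(m),g_{\tau M(1\overline0)}(m))$ some finite $h$ suffices. The symmetric family $\tau(\{1(10)^h,1(10)^{h+1}\}^\infty)$ handles the second case, and $1\{01^h,01^{h+1}\}^\infty$ the third. For $\mathbf u\in\mathcal S_{\{L,M,R\}}$ the argument is even shorter: with $\sigma'_n=\sigma_1\cdots\sigma_n$ one has $\inf(\sigma'_n(1\overline0))\to\inf(\mathbf u)$ and $\sup(\sigma'_n(0\overline1))\to\sup(\mathbf u)$, hence for large $n$ every $\sigma'_n(\mathbf v)$ with $\mathbf v\in\{0,1\}^\infty\setminus\{\overline0,\overline1\}$ lies in $U_\beta(m)$; take $W_0=\sigma'_n(0)$, $W_1=\sigma'_n(1)$.

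The simplification you are missing is twofold: Lemma~\ref{l:infsup} already treats an arbitrary $\sigma\in\{L,M,R\}^*$ in one shot, so no level-by-level lift is needed; and the margin $\delta$ is absorbed not by perturbing a period of~$\mathbf u^*$ but by letting a single integer parameter ($h$ or~$n$) tend to infinity along a family of \emph{fully explicit} periodic comparison words whose $\inf$ and $\sup$ are read off directly.
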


\begin{figure}[ht] 
\begin{tikzpicture}[scale=2.25]
\fill[color=black!10](-2.25,1.25)--(0,-1)--(2.25,1.25);
\fill[color=black!20](-2.25,1.25)--(-1,0)--(.25,1.25);
\fill[color=black!20](-.25,1.25)--(1,0)--(2.25,1.25);
\fill[color=black!30](-.25,1.25)--(0,1)--(.25,1.25);
\draw[dashed](-1.5,0)node[left]{$1+\sqrt{m}$}--(2,0);
\node[below] at (0,0){$\sigma(\overline{0})\in U_\beta(m)$};
\node at (-1,1){$\sigma(1\overline{0})\in U_\beta(m)$};
\node at (1,1){$\sigma(0\overline{1})\in U_\beta(m)$};
\draw(-2.25,1.25)--(.125,-1.125);
\node[below=1ex] at (-2,1){$f_{\sigma(\overline{0})}$};
\draw(2.25,1.25)--(-.125,-1.125);
\node[below=2ex] at (2,1){$g_{\sigma(\overline{1})}$};
\draw(-.25,1.25)--(1.25,-.25)node[below right=-1ex]{$f_{\sigma(0\overline{1})}$};
\draw(.25,1.25)--(-1.25,-.25)node[below left=-1ex]{$g_{\sigma(1\overline{0})}$};
\draw(-.375,.875)--(1.125,-.625)node[below right=-1ex]{$f_{\sigma(01\overline{0})}$};
\draw(.375,.875)--(-1.125,-.625)node[below left=-1ex]{$g_{\sigma(10\overline{1})}$};
\draw[dotted](0,-1)--(0,-1.25)node[below=-1ex]{$\mu_{\sigma(\overline{0})}$};
\draw[dotted](-1,0)--(-1,-1.25)node[below=-1ex]{$\mu_{\sigma(1\overline{0})}$};
\draw[dotted](1,0)--(1,-1.25)node[below=-1ex]{$\mu_{\sigma(0\overline{1})}$};
\draw[dotted](.25,.75)--(.25,-1.25)node[below right=-1ex]{$\mu_{\sigma(10\overline{1})}$};
\draw[dotted](-.25,.75)--(-.25,-1.25)node[below left=-1ex]{$\mu_{\sigma(01\overline{0})}$};
\draw[thick,blue](-1,0)--node[below left=-1ex]{$\mathcal{G}(m)$}(0,-1)--node[below right=-1ex]{$\mathcal{G}(m)$}(1,0);
\draw[thick,red](-1,0)--node[above left=-1ex]{$\mathcal{L}(m)$}(-.25,.75) (1,0)--node[above right=-1ex]{$\mathcal{L}(m)$}(.25,.75);
\end{tikzpicture}
\caption{A~schematic picture for $\sigma \in \{L,R\}^* M$. For $\sigma \in \{L,M,R\}^* M$, the situation is similar, except for $\mathcal{G}(m)$ and $1+\sqrt{m}$.} \label{f:musigma0}
\end{figure}
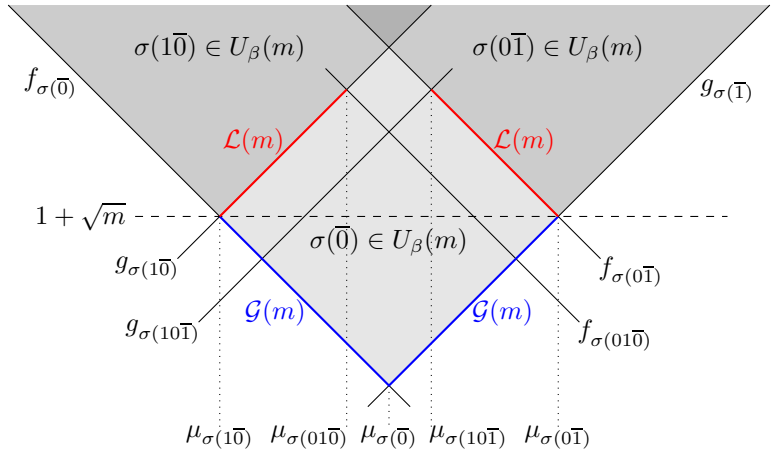

\begin{proof}
Let $\sigma \in \{L,M,R\}^*$.
For all $h \ge 1$, $\mathbf{v} \in 1\{0(01)^h, 0(01)^{h+1}\}^\infty$, we have
\[
\inf(\sigma(\mathbf{v})) \ge \inf(\sigma(\overline{10(01)^{h-1}0})) \quad \mbox{and} \quad \sup(\sigma(\mathbf{v})) \le \sup(\sigma(\overline{(01)^{h+1}0}))
\]
by Lemma~\ref{l:infsup}, with 
\[
\inf(\sigma(\overline{10(01)^{h-1}0})) \to \inf(\sigma M(1\overline{0})),\ \sup(\sigma(\overline{(01)^{h+1}0})) \to \sup(\sigma M(\overline{0})) \quad (h\to\infty).
\]
Therefore, we have for each $\beta > \max(f_{\sigma M(\overline{0})}(m), g_{\sigma M(1\overline{0})}(m))$ some $h \ge1$ such that $\sigma(\{0(01)^h, 0(01)^{h+1}\}^\infty) \subseteq U_\beta(m)$.
If $m \ge \mu_{\sigma M(1\overline{0})}$, then $f_{\sigma M(\overline{0})}(m) = f_{\sigma M(1\overline{0})}(m) \le g_{\sigma M(1\overline{0})}(m)$, thus $U_\beta(m) \cap \{0,1\}^\infty$ is uncountable (and has the cardinality of the continuum) for all $\beta > g_{\sigma M(1\overline{0})}(m)$, i.e., $\mathcal{L}(m) \le g_{\sigma M(1\overline{0})}(m)$.
By symmetry, sequences in $\sigma(\{1(10)^h, 1(10)^{h+1}\}^\infty)$ give that $\mathcal{L}(m) \le f_{\sigma M(0\overline{1})}(m)$ for $m \le \mu_{\sigma M(0\overline{1})}$.
Similarly, sequences in $1\{01^h, 01^{h+1}\}^\infty$ give that $\mathcal{L}(m) \le g_{0\overline{1}}(m)$ for $m \ge \mu_{0\overline{1}}$.

Let now $\mathbf{u}$ be a limit word of a primitive sequence $(\sigma_n)_{n\ge1} \in \{L,M,R\}^\infty$, and set $\sigma'_n = \sigma_1 \sigma_2 \cdots \sigma_n$. 
Then $\inf(\sigma'_n(1\overline{0})) \le \inf(\mathbf{u}) \le \inf(\sigma'_n(10\overline{1}))$ for all $n\ge1$, thus $\inf(\sigma'_n(1\overline{0}))
\to \inf(\mathbf{u})$ and (by symmety) $\sup(\sigma'_n(0\overline{1})) \to \sup(\mathbf{u})$ as $n \to \infty$.
Therefore, for each $\beta > \max(f_\mathbf{u}(m), g_\mathbf{u}(m))$ there is $n \ge 1$ such that $\sigma'_n(\mathbf{v}) \in U_\beta(m)$ for all $\mathbf{v} \in \{0,1\}^\infty \setminus \{\overline{0},\overline{1}\}$, hence $\mathcal{L}(m) \le g_\mathbf{u}(m)$ for $m \ge \mu_\mathbf{u}$ and  $\mathcal{L}(m) \le f_\mathbf{u}(m)$ for $m \le \mu_\mathbf{u}$.

If $\{v,w\}^\infty \subseteq  U_\beta(m)$, then by \cite{Hutchinson81} we have $\dim_H(\pi_\beta(U_\beta(m))) \ge r$, with $r > 0$ such that $\beta^{-|v|r}+\beta^{-|w|r} = 1$, where $|v|$ and $|w|$ denote the lengths of $v$ and $w$.
\end{proof}

For the lower bound, we use Lemma~\ref{l:sigma} below, which tells us that, if the orbit of a sequence satisfies inequalities that hold for all non-trivial images of $\sigma \in \{L,M,R\}^*$, then it is eventually in the image of~$\sigma$.
In particular, with $\sigma = M^n$, $n \ge 0$, this yields that $U_\beta(\{0,1\})$ is countable for all $\beta$ less than the Komornik--Loreti constant; cf.\ \cite{GlendinningSidorov01}.
First we show that the conditions of Lemma~\ref{l:infsup} are satisfied for a suffix. 

\begin{lemma} \label{l:inf1sup0}
Let $\mathbf{u} \in \{0,1\}^\infty$ with $\mathbf{u} \ne 0^k \overline{1}$ and $\mathbf{u} \ne 1^k \overline{0}$ for all $k \ge 0$.
There is a suffix $\mathbf{v}$ of $\mathbf{u}$ such that $\inf(\mathbf{v}) = \inf_1(\mathbf{v}) = \inf_1(\mathbf{u})$ and $\sup(\mathbf{v}) = \sup_0(\mathbf{v}) =  \sup_0(\mathbf{u})$.
\end{lemma}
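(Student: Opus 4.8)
The plan is to single out the finitely many suffixes of $\mathbf u$ that fall outside the interval $[\inf_1(\mathbf u),\sup_0(\mathbf u)]$ and to take for $\mathbf v$ the first suffix lying beyond all of them, correcting the starting point by a single letter in the degenerate case where $\mathbf u$ is eventually equal to one of its extremal suffixes. First, since $\mathbf u$ is neither $0^k\overline1$ nor $1^k\overline0$, it contains both factors $01$ and $10$; hence some suffix following a $1$ begins with $0$ and some suffix following a $0$ begins with $1$, so $\inf_1(\mathbf u)$ begins with $0$, $\sup_0(\mathbf u)$ begins with $1$, and in particular $\inf_1(\mathbf u)<\sup_0(\mathbf u)$. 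Writing $\mathbf v^{(n)}=u_nu_{n+1}\cdots$, every suffix of $\mathbf v^{(n)}$ is a suffix of $\mathbf u$, so $\inf_1(\mathbf v^{(n)})\ge\inf_1(\mathbf u)$ and $\sup_0(\mathbf v^{(n)})\le\sup_0(\mathbf u)$ for all $n$.

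Next I would show that the set of $n$ with $\mathbf v^{(n)}<\inf_1(\mathbf u)$ is an initial segment: if $\mathbf v^{(n)}<\inf_1(\mathbf u)$ then $\mathbf v^{(n)}$ begins with strictly more zeros than $\inf_1(\mathbf u)$ does, and $u_{n-1}\ne1$ (otherwise $\mathbf v^{(n)}\ge\inf_1(\mathbf u)$), so for $n\ge2$ we have $u_{n-1}=0$ and $\mathbf v^{(n-1)}=0\mathbf v^{(n)}$ again begins with more zeros than $\inf_1(\mathbf u)$, hence is again smaller; dually for $\mathbf v^{(n)}>\sup_0(\mathbf u)$. As $\mathbf u$ cannot begin with both $0$ and $1$, the ``bad'' indices $n$ (those with $\mathbf v^{(n)}\notin[\inf_1(\mathbf u),\sup_0(\mathbf u)]$) form an initial segment $\{1,\dots,N-1\}$, so $\mathbf v^{(n)}\in[\inf_1(\mathbf u),\sup_0(\mathbf u)]$ for all $n\ge N$. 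I would also record a self-minimality property for the degenerate case: if $\inf_1(\mathbf u)\ne\overline0$ then $\inf_1(\mathbf u)$ is lexicographically smallest among \emph{all} its own suffixes — it is already smallest among the suffixes following a $1$, and if it were not smallest overall, its orbit infimum $0\,x$ with $x<\inf_1(\mathbf u)$ would, upon prepending one more $0$, yield an element of its orbit closure strictly below that infimum, a contradiction — and dually for $\sup_0(\mathbf u)\ne\overline1$.

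With this in hand I would set $\mathbf v=\mathbf v^{(N)}$ in the generic situation. When $\inf_1(\mathbf u)$ is approached by suffixes following a $1$ along positions tending to infinity, the word $1\,\inf_1(\mathbf u)$ lies in the orbit closure of \emph{every} $\mathbf v^{(n)}$, which forces $\inf_1(\mathbf v^{(n)})=\inf(\mathbf v^{(n)})=\inf_1(\mathbf u)$ for all $n$; taking $n=N$ is fine because by construction every suffix of $\mathbf v^{(N)}$ lies in $[\inf_1(\mathbf u),\sup_0(\mathbf u)]$. The genuinely delicate case is when $\mathbf u$ is eventually equal to $b:=\inf_1(\mathbf u)\ne\overline0$, say $\mathbf u=w\,b$ with $w$ ending in $1$; then $b$ occurs as a suffix only once and $\mathbf v^{(N)}$ need no longer have infimum $b$, but self-minimality gives $\inf\{S^ib:i\ge0\}=b$, so the choice $\mathbf v=\mathbf v^{(|w|)}=1\,b$ works: $\inf(1b)=\min(1b,\inf\{S^ib:i\ge0\})=b$ and $\inf_1(1b)=\min(b,\inf\{S^jb:b_j=1\})=b$. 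The supremum conditions are handled symmetrically; the cases $\inf_1(\mathbf u)=\overline0$ and $\sup_0(\mathbf u)=\overline1$ are immediate, since then $\inf(\mathbf v^{(n)})=\inf_1(\mathbf v^{(n)})=\overline0$ (resp.\ $\sup(\mathbf v^{(n)})=\sup_0(\mathbf v^{(n)})=\overline1$) for all $n$.

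The hard part is making the two sides cohere: one needs a single suffix $\mathbf v$ whose infimum and supremum are attained in the limit at suffixes following a $1$ and a $0$ respectively, \emph{and} whose $\inf_1$ and $\sup_0$ still equal those of $\mathbf u$. The obstruction is exactly the ``eventually extremal'' configuration, where one must step the starting point back by precisely one letter to restore a triggering $1$ (or $0$); the self-minimality property is what guarantees that this single backward step introduces no suffix below $\inf_1(\mathbf u)$ (or above $\sup_0(\mathbf u)$), and one uses that $\mathbf u$ cannot be eventually equal to both $\inf_1(\mathbf u)$ and $\sup_0(\mathbf u)$ — they begin with different letters — so at most one side ever needs this correction.
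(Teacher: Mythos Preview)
Your overall strategy---strip an initial segment of ``bad'' suffixes and then pick an appropriate $\mathbf v^{(N)}$---is reasonable, but there is a genuine error in the argument, and the whole machinery is far heavier than what is needed.

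The claim ``if $\mathbf v^{(n)}<\inf_1(\mathbf u)$ then $\mathbf v^{(n)}$ begins with strictly more zeros than $\inf_1(\mathbf u)$'' is false. Take $\mathbf u=0010\overline1$: here $\inf_1(\mathbf u)=0\overline1$ has one leading zero, and $\mathbf v^{(2)}=010\overline1<0\overline1$ also has one leading zero. What \emph{is} true (and suffices for the initial-segment conclusion) is that $\mathbf v^{(n)}<\inf_1(\mathbf u)$ forces $u_1=\cdots=u_{n}=0$: if some $u_j=1$ with $j<n$, choose the largest such $j$; then $\mathbf v^{(j+1)}=0^{\,n-1-j}\mathbf v^{(n)}$ has strictly more leading zeros than $\mathbf v^{(n)}$, hence $\mathbf v^{(j+1)}<\mathbf v^{(n)}<\inf_1(\mathbf u)$, contradicting $u_j=1$. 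Your generic/degenerate dichotomy is also not clean: ``$\mathbf u$ eventually equals $b$'' does \emph{not} imply that $b$ occurs as a suffix only once (take $b=\overline{01}$), and $\mathbf u$ \emph{can} be eventually equal to both $\inf_1(\mathbf u)$ and $\sup_0(\mathbf u)$ when one is a shift of the other; so the coherence paragraph at the end does not cover all cases.

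By contrast, the paper's proof is a two-line direct verification. If $\inf(\mathbf u)=\inf_1(\mathbf u)$ and $\sup(\mathbf u)=\sup_0(\mathbf u)$, take $\mathbf v=\mathbf u$. Otherwise, say $\inf(\mathbf u)\ne\inf_1(\mathbf u)$; then necessarily $\inf(\mathbf u)=\mathbf u=0^{k}01\mathbf u'$ with $\mathbf u'\ne\overline1$, and one checks directly that $\sup_0(\mathbf u)=\sup(01\mathbf u')$ and $\inf_1(\mathbf u)=\inf(1\mathbf u')$. If $\inf(01\mathbf u')=\inf_1(01\mathbf u')$ take $\mathbf v=01\mathbf u'$; if not, then $\mathbf u'=1^n01\mathbf u''$ with $\mathbf u''>\mathbf u'$, which forces $\sup_0(\mathbf u)=\sup(1\mathbf u')$, and one takes $\mathbf v=1\mathbf u'$. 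No self-minimality lemma, no generic/degenerate split, no coherence argument is needed: stripping all leading zeros but at most one already does the job.
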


\begin{proof}
If $\inf(\mathbf{u}) = \inf_1(\mathbf{u})$ and $\sup(\mathbf{u}) = \sup_0(\mathbf{u})$, then we can take $\mathbf{v} = \mathbf{u}$. 
Otherwise, assume that $\inf(\mathbf{u}) \ne \inf_1(\mathbf{u})$, the case $\sup(\mathbf{u}) \ne \sup_0(\mathbf{u})$ being symmetric. 
Then we have $\inf(\mathbf{u}) = \mathbf{u} = 0^k 01\mathbf{u}'$ for some $k \ge 0$, $\mathbf{u}' \in \{0,1\}^\infty \setminus \{\overline{1}\}$, 
\[
\sup\nolimits_0(\mathbf{u}) = \sup\nolimits_0(01\mathbf{u}') = \sup(01\mathbf{u}'),\ \inf\nolimits_1(\mathbf{u}) = \inf\nolimits_1(01\mathbf{u}') = \inf\nolimits_1(1\mathbf{u}') = \inf(1\mathbf{u}').
\]
If $\inf_1(01\mathbf{u}') \ne \inf(01\mathbf{u}')$, then $\mathbf{u}' = 1^n01\mathbf{u}''$ with $n\ge 0$, $\mathbf{u}''>\mathbf{u}'$, which implies that $\sup_0(\mathbf{u}) = \sup_0(1\mathbf{u}') = \sup(1\mathbf{u}')$.
Hence, we can take $\mathbf{v} = 01\mathbf{u}'$ or $\mathbf{v} = 1\mathbf{u}'$.
\end{proof}

\begin{lemma} \label{l:sigma}
Let $\mathbf{u} \in \{0,1\}^\infty\!\!$, $\sigma \in \{L,M,R\}^*$, with $\inf(\mathbf{u}) \ge \inf(\sigma(1\overline{0}))$, $\sup(\mathbf{u}) \le \sup(\sigma(0\overline{1}))$.
Then $\mathbf{u}$ ends with $\sigma(\mathbf{v})$ for some $\mathbf{v} \in \{0,1\}^\infty$ or with $\sigma'(\overline{0})$, $\sigma' \in \{L,M,R\}^*M$, $\sigma \in \sigma' \{L,M,R\}^*$. 
\end{lemma}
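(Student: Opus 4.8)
### Proof plan for Lemma \ref{l:sigma}

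The plan is to induct on the length of $\sigma$ as a word over $\{L,M,R\}$, peeling off one substitution at a time from the left and showing that the hypotheses descend to a desuffixed and desubstituted word. So write $\sigma = \tau\sigma_1$ with $\sigma_1 \in \{L,M,R\}$ (the \emph{first} letter read) and $\tau \in \{L,M,R\}^*$; the base case $\sigma = \mathrm{id}$ is trivial since then $\mathbf{u}$ ends with $\sigma(\mathbf{u})$. For the inductive step, the first task is to reduce to the situation where Lemma \ref{l:infsup} applies, i.e.\ where $\inf(\mathbf{u}) = \inf_1(\mathbf{u})$ and $\sup(\mathbf{u}) = \sup_0(\mathbf{u})$: by Lemma \ref{l:inf1sup0} this holds on a suffix of $\mathbf{u}$, unless $\mathbf{u}$ ends with $0^k\overline1$ or $1^k\overline0$, in which case $\mathbf{u}$ already ends with $\overline0$ or $\overline1 = M(\overline1)$ — but $\sup(\mathbf{u}) \le \sup(\sigma(0\overline1))$ forces $\overline1$ to be excluded unless $\sigma(0\overline1)$ is cofinally $1$, and one checks that $\overline1$ only occurs as a suffix-limit when $\sigma \in \{M,R\}^*M \cup \dots$; in any case $\overline0 = \sigma'(\overline0)$ with $\sigma' = \sigma$ works for the second alternative, so the degenerate case is handled directly. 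Having replaced $\mathbf{u}$ by a suffix, I keep the same name $\mathbf{u}$, now with $\inf(\mathbf{u}) = \inf_1(\mathbf{u})$, $\sup(\mathbf{u}) = \sup_0(\mathbf{u})$.

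The heart of the argument: I claim that under the hypotheses, $\mathbf{u}$ lies in the image of $\sigma_1$ up to a bounded prefix, and after stripping $\sigma_1$ the analogous hypotheses hold for $\tau$. Concretely, $\sigma(1\overline0) = \sigma_1(\tau(1\overline0))$ and $\sigma(0\overline1) = \sigma_1(\tau(0\overline1))$, and Lemma \ref{l:infsup} (applied with the substitution $\sigma_1$ alone, together with the recursive descriptions of $\inf_1\circ\sigma_1$ and $\sup_0\circ\sigma_1$ it provides) lets me translate the bound $\inf(\sigma_1(\tau(1\overline0))) \le \inf(\mathbf{u})$ into a statement about the "de-$\sigma_1$'d" infimum. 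The key combinatorial point, which I would isolate as a sub-claim, is: if $\mathbf{u} \in \{0,1\}^\infty$ satisfies $\inf_1(\mathbf{u}) \ge \inf(\sigma_1(\mathbf{w}))$ for every $\mathbf{w}$ with $\inf(\mathbf{w}) = \inf_1(\mathbf{w})$ beginning with $1$, and the symmetric bound for $\sup_0$, then $\mathbf{u}$ ends with $\sigma_1(\mathbf{u}')$ for some $\mathbf{u}'$; this is essentially a recognizability / synchronization statement for the single substitution $\sigma_1$. One then reads off $\inf(\mathbf{u}') = \inf_1(\mathbf{u}')$, $\sup(\mathbf{u}') = \sup_0(\mathbf{u}')$ from the "$w\sigma(\inf(\mathbf u))$" shape in Lemma \ref{l:infsup}, and — again via the monotonicity of $\sigma_1$ on infinite words and the bounded-prefix relations of that lemma — derives $\inf(\mathbf{u}') \ge \inf(\tau(1\overline0))$ and $\sup(\mathbf{u}') \le \sup(\tau(0\overline1))$. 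The induction hypothesis applied to $\mathbf{u}'$ and $\tau$ then gives that $\mathbf{u}'$ ends with $\tau(\mathbf{v})$ (whence $\mathbf{u}$ ends with $\sigma_1(\tau(\mathbf v))$-up-to-prefix $= \sigma(\mathbf v)$, and one absorbs the bounded prefix by shifting further into $\mathbf u$) or with $\sigma''(\overline0)$ with $\sigma'' \in \{L,M,R\}^*M$ and $\tau \in \sigma''\{L,M,R\}^*$; in the latter case $\mathbf{u}$ ends with $\sigma_1(\sigma''(\overline0))$ up to a bounded prefix, and setting $\sigma' = \sigma''$ if $\sigma_1 \ne L$ respectively handling $\sigma_1 = L$ by noting $L\sigma''(\overline0) = L(\sigma''(\overline0))$ still has the required form, we get $\sigma \in \sigma'\{L,M,R\}^*$ as needed.

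I expect the main obstacle to be the bookkeeping of the "bounded prefix" $w$ and the degenerate boundary cases rather than any deep idea: one must be careful that ending with $\sigma(\mathbf v)$ \emph{up to a prefix} really does imply ending with $\sigma(\mathbf v')$ for a genuine $\mathbf v'$ (use that $\sigma$ maps $\{0,1\}^\infty$ into a shift space closed under the relevant desuffixing, or simply re-run Lemma \ref{l:inf1sup0} and shift), and that the alternative "$\sigma'(\overline0)$, $\sigma \in \sigma'\{L,M,R\}^*$" is preserved when we prepend $\sigma_1$. The other delicate point is the case $\sigma_1 = L$: since $L(1\overline0) = 01\overline0$ has $\sup$ governed by a \emph{prefix} relation ($w\sup_0(L(\mathbf u)) = L(\sup(\mathbf u))$) rather than a suffix one, the translation of the $\sup$-hypothesis through $L$ needs the last clause of Lemma \ref{l:infsup}, and one must check the inequality survives the prefix adjustment — this is where I would spend the most care. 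Everything else is a routine, if lengthy, induction using the monotonicity and suffix/prefix structure already recorded in Lemmas \ref{l:fg}, \ref{l:infsup} and \ref{l:inf1sup0}.
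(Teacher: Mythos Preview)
Your plan is essentially the paper's own proof: induct on the length of $\sigma$, peel off the outermost letter, desubstitute $\mathbf{u}$ through that single letter, use Lemma~\ref{l:inf1sup0} and Lemma~\ref{l:infsup} to push the $\inf$/$\sup$ bounds down to the desubstituted word, and recurse. Two small points: (i)~your decomposition is notationally inconsistent---you write $\sigma=\tau\sigma_1$ but then use $\sigma(x)=\sigma_1(\tau(x))$, so you mean $\sigma=\sigma_1\tau$ with $\sigma_1$ outermost, exactly as the paper's $\varphi\sigma$; and correspondingly in the second alternative you should set $\sigma'=\sigma_1\sigma''$, not $\sigma'=\sigma''$; (ii)~the paper applies Lemma~\ref{l:inf1sup0} \emph{after} desubstituting (to the word $\mathbf v$ with $\mathbf u$ ending in $\varphi(\mathbf v)$), which is the cleaner order since Lemma~\ref{l:infsup} needs $\inf=\inf_1$, $\sup=\sup_0$ on the \emph{inner} word, and it carries out the ``recognizability'' step you allude to by three short explicit arguments, one per letter $L,M,R$.
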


\begin{proof}
The statement is trivially true when $\sigma$ is the identity. 
Suppose that it holds for some $\sigma \in \{L,M,R\}^* $, let $\varphi \in \{L,M,R\}$ and $\mathbf{u} \in \{0,1\}^\infty$ with $\inf(\mathbf{u}) \ge \inf(\varphi\sigma(1\overline{0}))$, $\sup(\mathbf{u}) \le \sup(\varphi\sigma(0\overline{1}))$.

If $\varphi=L$, then $\sup(\mathbf{u}) \le \overline{10}$, thus every $1$ in $\mathbf{u}$ is followed by a~$0$, hence $\mathbf{u} = L(\mathbf{v})$ or  $\mathbf{u} = 1L(\mathbf{v})$ for some $\mathbf{v} \in \{0,1\}^\infty$.
Similary, if $\varphi=R$, then $\inf(\mathbf{u}) \ge \overline{01}$, hence $\mathbf{u} = R(\mathbf{v})$ or  $\mathbf{u} = 0R(\mathbf{v})$ for some $\mathbf{v} \in \{0,1\}^\infty$. 
If $\varphi=M$, then $\inf(\mathbf{u}) \ge 0\overline{01}$ and $\sup(\mathbf{u}) \le 1\overline{10}$.
Hence, for all $k \ge 1$, $0(01)^k$ as well as $1(10)^k$ is always followed in~$\mathbf{u}$ by $01$ or~$10$.
Since $\mathbf{u}$ contains $001$ or $110$ if $\mathbf{u} \notin \{M(\overline{0}), M(\overline{1})\}$, we obtain that $\mathbf{u}$ ends with $M(\mathbf{v})$ for some $\mathbf{v} \in \{0,1\}^\infty$. 

We can assume that $\mathbf{v} \in \{\overline{0},\overline{1}\}$ or $\inf_1(\mathbf{v}) = \inf(\mathbf{v})$ and $\sup_0(\mathbf{v}) = \sup(\mathbf{v})$, by Lemma~\ref{l:inf1sup0}.
If $\mathbf{v} \ne \overline{0}$, then we cannot have $\inf(\mathbf{v}) < \inf(\sigma(1\overline{0}))$ because this would imply that $\inf(\varphi(\mathbf{v})) < \inf(\varphi\sigma(1\overline{0}))$ by Lemma~\ref{l:infsup}. 
Similarly, we obtain that $\sup(\mathbf{v}) \le \sup(\sigma(1\overline{0}))$ if $\mathbf{v} \ne \overline{1}$.
If $\mathbf{v} = \overline{0}$, $\varphi \in \{L,R\}$, then $\inf(\varphi(\overline{0})) \ge \inf(\varphi\sigma(1\overline{0}))$ implies that $\inf(\sigma(1\overline{0})) = \overline{0}$, thus $\mathbf{v} = \sigma(\overline{0})$. 
Similarly, if $\mathbf{v} = \overline{1}$ and $\varphi \in \{L,R\}$, then $\sup(\varphi(\overline{1})) \le \sup(\varphi\sigma(0\overline{1}))$ implies that $\sup(\sigma(0\overline{1})) = \overline{1}$, thus $\mathbf{v} = \sigma(\overline{1})$. 
If $\mathbf{v} \in \{\overline{0},\overline{1}\}$, $\varphi = M$, then $\mathbf{u}$ ends with $M(\overline{0})$ since $M(\overline{1}) = 1M(\overline{0})$. 
Therefore, $\mathbf{u}$ ends with $\varphi\sigma(\mathbf{v})$ or with $\sigma'(\overline{0})$, $\sigma' \in \{L,M,R\}^*M$, $\varphi\sigma \in \sigma' \{L,M,R\}^*$. 
\end{proof}

We obtain the following lower bound for~$\mathcal{L}(m)$, cf.\ Figure~\ref{f:musigma0}.

\begin{proposition} \label{p:lower}
Let $m \in (1,2]$. 
We have $\mathcal{L}(m) \ge g_{0\overline{1}}(m)$ and
\[
\mathcal{L}(m) \ge \begin{cases}
g_{\sigma(1\overline{0})}(m) & \mbox{if}\ m \le \mu_{\sigma(01\overline{0})},\, \sigma \in \{L,M,R\}^*, \\[.5ex]
f_{\sigma(0\overline{1})}(m) & \mbox{if}\ m \ge \mu_{\sigma(10\overline{1})},\, \sigma \in \{L,M,R\}^*, \\[.5ex]
g_\mathbf{u}(m) & \mbox{if}\ m \le \mu_\mathbf{u},\, \mathbf{u} \in \mathcal{S}_{\{L,M,R\}}, \\
f_\mathbf{u}(m) & \mbox{if}\ m \ge \mu_\mathbf{u},\, \mathbf{u} \in \mathcal{S}_{\{L,M,R\}}.
\end{cases}
\]
\end{proposition}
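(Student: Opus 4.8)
The plan is to establish the lower bound for $\mathcal{L}(m)$ by showing that if $\beta$ is below the claimed value, then $U_\beta(m) \cap \{0,1\}^\infty$ is countable. The key tool is Lemma~\ref{l:sigma}: one must argue that every $\mathbf{u} \in U_\beta(m) \cap \{0,1\}^\infty$ satisfies the orbit inequalities $\inf(\mathbf{u}) \ge \inf(\sigma(1\overline{0}))$ and $\sup(\mathbf{u}) \le \sup(\sigma(0\overline{1}))$ for a suitable $\sigma \in \{L,M,R\}^*$, so that $\mathbf{u}$ eventually lies in the image of~$\sigma$. The starting point is the characterisation from Lemma~\ref{l:U}: $\mathbf{u} \in U_\beta(m)$ forces $\beta\,\pi_\beta(\sup(\mathbf{u})) \le m \le (\beta-1)(1+\pi_\beta(\inf_1(\mathbf{u})))$. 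Combined with Lemma~\ref{l:fg}, an upper bound on $\beta$ translates into constraints on $\sup(\mathbf{u})$ and $\inf(\mathbf{u})$, which is exactly what feeds Lemma~\ref{l:sigma}.

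First I would handle the base case $\mathcal{L}(m) \ge g_{0\overline{1}}(m)$: if $\beta < g_{0\overline{1}}(m)$, then by Lemma~\ref{l:fg} we have $(\beta-1)(1+\pi_\beta(\inf(\mathbf{u}))) \le (\beta-1)(1+\pi_\beta(0\overline{1})) < m$ whenever $\inf_1(\mathbf{u}) \ge 0\overline{1}$ would fail in the wrong direction — more precisely one shows any $\mathbf{u}\in U_\beta(m)\cap\{0,1\}^\infty$ with infinitely many ones must have $\inf_1(\mathbf{u}) > 0\overline{1}$ forbidden, forcing $\inf_1(\mathbf{u})$ small, hence $\mathbf{u}$ lies in $R(\{0,1\}^\infty)$ up to a prefix, and iterating (or appealing directly) shows only countably many such $\mathbf{u}$. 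For the main cases, I would proceed by the induction already set up in Lemma~\ref{l:sigma}: given $\sigma \in \{L,M,R\}^*$ and $m \le \mu_{\sigma(01\overline{0})}$, I claim every $\mathbf{u} \in U_\beta(m)\cap\{0,1\}^\infty$ with $\beta < g_{\sigma(1\overline{0})}(m)$ satisfies $\inf(\mathbf{u}) \ge \inf(\sigma(1\overline{0}))$ and $\sup(\mathbf{u}) \le \sup(\sigma(0\overline{1}))$. The inequality $\inf(\mathbf{u}) \ge \inf(\sigma(1\overline{0}))$: if it failed, then $\inf_1(\mathbf{u}) < \inf(\sigma(1\overline{0}))$ would make $(\beta-1)(1+\pi_\beta(\inf_1(\mathbf{u}))) < (\beta-1)(1+\pi_\beta(\inf(\sigma(1\overline{0}))))$, and choosing $\beta = g_{\sigma(1\overline{0})}(m)$ this right side equals $m$ by definition of $g$, contradicting $\mathbf{u}\in U_\beta(m)$; the bound $\sup(\mathbf{u}) \le \sup(\sigma(0\overline{1}))$ follows because $f_{\sigma(0\overline{1})}(m) \le g_{\sigma(1\overline{0})}(m)$ on this range of $m$ (the condition $m \le \mu_{\sigma(01\overline{0})}$ is what guarantees this, via the monotonicity and the relation $f_{\sigma(0\overline{1})}(\mu_{\sigma(01\overline{0})}) = g_{\sigma(1\overline{0})}(\mu_{\sigma(01\overline{0})})$ read off from Figure~\ref{f:musigma0}). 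Then Lemma~\ref{l:sigma} gives that $\mathbf{u}$ ends with $\sigma(\mathbf{v})$ (or with $\sigma'(\overline{0})$, a single countable family), and Lemma~\ref{l:U} together with Lemma~\ref{l:infsup} transfers membership back to $\mathbf{v} \in U_{\beta'}(m')$ for the des"substituted" parameters, so an induction on the length of $\sigma$ (or a direct argument that the nested images shrink to a single point for Sturmian/Thue--Morse--Sturmian limit words) yields countability.

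For the limit-word cases $m \le \mu_\mathbf{u}$ with $\mathbf{u} \in \mathcal{S}_{\{L,M,R\}}$, the point is that $\mathbf{u}$ is a limit word of a primitive sequence $(\sigma_n)$, so $\inf(\sigma'_n(1\overline{0})) \to \inf(\mathbf{u})$ and $\sup(\sigma'_n(0\overline{1})) \to \sup(\mathbf{u})$ as in the proof of Proposition~\ref{p:upper}; if $\beta < g_\mathbf{u}(m)$ then for $n$ large $\beta < g_{\sigma'_n(1\overline{0})}(m)$ and still $m$ lies in the relevant subinterval, so the finite-$\sigma$ case applies with $\sigma = \sigma'_n$ and any unique expansion must end inside $\sigma'_n(\{0,1\}^\infty)$ for every $n$; since $\bigcap_n \sigma'_n(\{0,1\}^\infty)$ reduces the number of admissible words (it forces the tail to be built from arbitrarily deep nestings, leaving only countably many — essentially the boundary orbit), countability follows. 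The main obstacle I anticipate is the bookkeeping in the inductive step: verifying that the substituted parameters $(\beta', m')$ land back in the correct range so the induction hypothesis applies, and carefully tracking the finitely many exceptional "boundary" sequences (the $\sigma'(\overline{0})$, $\sigma'(\overline{1})$, and words of the form $\sigma(v)$ with $v$ eventually periodic of a special type) that must be collected into a countable set. The other delicate point is matching the precise endpoints $\mu_{\sigma(01\overline{0})}$ and $\mu_{\sigma(10\overline{1})}$ with the crossing of the $f$ and $g$ curves — this is where the asymmetry between this theorem and the $\mathcal{G}(m)$ formula (exchanged roles of $f$ and $g$, and the extra substitution $M$) really enters, and it needs the monotonicity lemma applied with some care to the images under $M$ as opposed to under $L$ and $R$.
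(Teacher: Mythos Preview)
Your overall plan is sound (use Lemmas~\ref{l:U}, \ref{l:fg}, \ref{l:sigma} to constrain $\inf$ and $\sup$), but the execution in the main case has a genuine gap.

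The inequality you assert, $f_{\sigma(0\overline{1})}(m) \le g_{\sigma(1\overline{0})}(m)$ for $m \le \mu_{\sigma(01\overline{0})}$, is false. You justify it by ``$f_{\sigma(0\overline{1})}(\mu_{\sigma(01\overline{0})}) = g_{\sigma(1\overline{0})}(\mu_{\sigma(01\overline{0})})$ read off from Figure~\ref{f:musigma0}'', but the figure does not say this: what actually holds at $m=\mu_{\sigma(01\overline{0})}$ is $f_{\sigma(01\overline{0})}(m)=g_{\sigma(01\overline{0})}(m)=g_{\sigma(1\overline{0})}(m)$ (the last equality because $\inf(\sigma(01\overline{0}))=\inf(\sigma(1\overline{0}))$). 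Since $\sup(\sigma(0\overline{1}))>\sup(\sigma(01\overline{0}))$, Lemma~\ref{l:fg} gives $f_{\sigma(0\overline{1})}(m)>f_{\sigma(01\overline{0})}(m)$, so your claimed inequality fails, e.g.\ near $m=\mu_{\sigma(1\overline{0})}$. Consequently you cannot deduce $\sup(\mathbf{u})\le\sup(\sigma(0\overline{1}))$ from $\beta<g_{\sigma(1\overline{0})}(m)$ alone on that interval, and your plan to then ``desubstitute'' to parameters $(\beta',m')$ and induct on $|\sigma|$ has no content anyway: there is no renormalisation of $(\beta,m)$ under $L,M,R$ here.

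The paper avoids all of this by using the \emph{tighter} supremum bound. From $\beta<g_{\sigma(1\overline{0})}(m)=g_{\sigma(01\overline{0})}(m)\le f_{\sigma(01\overline{0})}(m)$ (valid because $m\le\mu_{\sigma(01\overline{0})}$) one gets, for any aperiodic $\mathbf{v}\in U_\beta(m)\cap 1\{0,1\}^\infty$, both $\inf(\mathbf{v})>\inf(\sigma(1\overline{0}))$ and $\sup(\mathbf{v})<\sup(\sigma(01\overline{0}))$. The second inequality is strictly stronger than the hypothesis of Lemma~\ref{l:sigma} (since $\sup(\sigma(01\overline{0}))<\sup(\sigma(0\overline{1}))$), so Lemma~\ref{l:sigma} applies and yields that $\mathbf{v}$ ends with $\sigma(\mathbf{v}')$ for some aperiodic $\mathbf{v}'$. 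But any such $\mathbf{v}'$ has $\sup_0(\mathbf{v}')\ge 1\overline{0}=\sup_0(01\overline{0})$, whence by Lemma~\ref{l:infsup} $\sup(\mathbf{v})\ge\sup(\sigma(\mathbf{v}'))\ge\sup(\sigma(01\overline{0}))$, an immediate contradiction. No induction, no countable bookkeeping beyond ``aperiodic sequences cannot exist''. Your base case and limit-word reduction are essentially right in spirit, though the base case is much simpler than you make it: every $\mathbf{v}\in 1\{0,1\}^\infty\setminus\{\overline{1}\}$ has $\inf(\mathbf{v})\le 0\overline{1}$, hence $g_\mathbf{v}(m)\ge g_{0\overline{1}}(m)$, done.
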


\begin{proof}
For all $\mathbf{v} \in 1\{0,1\}^\infty \setminus \{\overline{1}\}$, we have $\inf(\mathbf{v}) \le 0\overline{1}$.
Then $\mathbf{v} \in U_\beta(m)$ implies that $\beta \ge g_{0\overline{1}}(m)$ by Lemma~\ref{l:U}, hence $\mathcal{L}(m) \ge g_{0\overline{1}}(m)$.

Suppose that $U_\beta(m) \cap \{0,1\}^\infty$ is uncountable for $\beta < g_{\sigma(1\overline{0})}(m)$, $m \le \mu_{\sigma(01\overline{0})}$, $\sigma \in \{L,M,R\}^*M$, thus $\beta < g_{\sigma(01\overline{0})}(m) \le f_{\sigma(01\overline{0})}(m)$.
Then $U_\beta(m)$ contains an aperiodic sequence $\mathbf{v} \in 1\{0,1\}^\infty$, with $f_\mathbf{v}(m) < f_{\sigma(01\overline{0})}(m)$ and $g_\mathbf{v}(m) < g_{\sigma(1\overline{0})}(m)$ by Lemma~\ref{l:U}, thus $\inf(\mathbf{v}) > \inf(\sigma(1\overline{0}))$ and $\sup(\mathbf{v}) < \sup(\sigma(01\overline{0}))$ by Lemma~\ref{l:fg}.
By Lemma~\ref{l:sigma}, $\mathbf{v}$ ends with $\sigma(\mathbf{v}')$ for some (aperiodic) $\mathbf{v}' \in \{0,1\}^\infty$, contradicting that $\sup(\mathbf{v}) < \sup(\sigma(01\overline{0}))$.
Symetrically, we get that $\mathcal{L}(m) \ge f_{\sigma(0\overline{1})}(m)$ for $m \ge \mu_{\sigma(10\overline{1})}$.

If $\mathbf{u}$ is a limit word of a primitive sequence $(\sigma_n)_{n\ge1} \in \{L,M,R\}^\infty$, then we have $\mu_{\sigma'_n(01\overline{0})} \to \mu_\mathbf{u}$ for $\sigma'_n = \sigma_1 \sigma_2 \cdots \sigma_n$ as $n \to\infty$, thus $\beta < g_\mathbf{u}(m)$, $m \le \mu_\mathbf{u}$ implies that $\beta < \min(g_{\sigma'_n(01\overline{0})}(m), f_{\sigma'_n(01\overline{0})}(m))$ for some $n \ge 1$, and we obtain as in the previous paragraph that $U_\beta(m) \cap \{0,1\}^\infty$ is at most countable.  
Therefore, we have $\mathcal{L}(m) \ge g_\mathbf{u}(m)$ and, similarly, $\mathcal{L}(m) \ge f_\mathbf{u}(m)$ for $m \ge \mu_\mathbf{u}$.
\end{proof}

Propositions~\ref{p:upper} and~\ref{p:lower} prove the formula for $\mathcal{L}(m)$ in Theorem~\ref{t:main}. 
It remains to show that this covers all $m \in (1,2]$. 

For the characterisation of~$\mathcal{G}(m)$, in \cite[Proposition~3.3]{BakerSteiner17} the partition 
\[
(\overline{0}, 0\overline{1}) = \mathcal{S}_{\{L,R\}} \cup \bigcup_{\sigma\in \{L,R\}^*} [\sigma(0\overline{01}), \sigma(\overline{01})]
\]
for intervals of sequences in $\{0,1\}^\infty$ is used, which is a consequence of the partition
\[
(\overline{0}, 0\overline{1}) = L((\overline{0}, 0\overline{1})) \cup [0\overline{01}, \overline{01}] \cup R((\overline{0}, 0\overline{1})). 
\]
We have to refine these partitions. 
For $\boldsymbol{\sigma} = (\sigma_n)_{n\ge1} \in \{L,M,R\}^\infty$, set
\[
I_{\boldsymbol{\sigma}} = \begin{cases}\{\inf(\mathbf{u}):\, \mathbf{u}\ \mbox{is a limit word of}\ \boldsymbol{\sigma}\} & \mbox{if $\boldsymbol{\sigma}$ is primitive}, \\ \{\inf(\sigma_1\sigma_2 \cdots \sigma_n(1\overline{0}))\} & \mbox{if}\ \sigma_n\sigma_{n+1}\cdots = M\overline{L},\, n\ge1, \\ [\inf(\sigma_1\sigma_2 \cdots \sigma_n(10\overline{1})), \inf(\sigma_1\sigma_2 \cdots \sigma_n(\overline{1}))] & \mbox{if}\ \sigma_n\sigma_{n+1}\cdots = M\overline{R},\, n\ge1, \\ \emptyset & \mbox{otherwise},\end{cases} 
\]
\[
J_{\boldsymbol{\sigma}} = \begin{cases}\{\sup(\mathbf{u}):\, \mathbf{u}\ \mbox{is a limit word of}\ \boldsymbol{\sigma}\} & \mbox{if $\boldsymbol{\sigma}$ is primitive}, \\ [\sup(\sigma_1\sigma_2 \cdots \sigma_n(\overline{0})), \sup(\sigma_1\sigma_2 \cdots \sigma_n(01\overline{0}))] & \mbox{if}\ \sigma_n\sigma_{n+1}\cdots = M\overline{L}, n\ge1, \\ \{\sup(\sigma_1\sigma_2 \cdots \sigma_n(0\overline{1}))\} & \mbox{if}\ \sigma_n\sigma_{n+1}\cdots = M\overline{R},\, n\ge1, \\ \emptyset & \mbox{otherwise}.\end{cases} 
\]
Note that, for a primitive sequence~$\boldsymbol{\sigma}$, $\inf(\mathbf{u})$ as well as $\sup(\mathbf{u})$ does not depend on the limit word~$\mathbf{u}$.
We order sequences in $\{L,M,R\}^\infty$ lexicographically.

\begin{lemma} \label{l:partition}
In $\{0,1\}^\infty$, we have
\[
(\overline{0}, 0\overline{1}) = \bigcup_{\boldsymbol{\sigma} \in \{L,M,R\}^\infty} I_{\boldsymbol{\sigma}} \quad \mbox{and} \quad (1\overline{0}, \overline{1}) = \bigcup_{\boldsymbol{\sigma} \in \{L,M,R\}^\infty} J_{\boldsymbol{\sigma}}.
\]
If $\boldsymbol{\sigma} < \boldsymbol{\sigma}'$, then $\mathbf{v} < \mathbf{v}'$ for all $\mathbf{v} \in I_{\boldsymbol{\sigma}}$, $\mathbf{v}' \in I_{\boldsymbol{\sigma}'}$, and for all $\mathbf{v} \in J_{\boldsymbol{\sigma}}$, $\mathbf{v}' \in J_{\boldsymbol{\sigma}'}$.
\end{lemma}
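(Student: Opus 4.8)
The plan is to prove Lemma~\ref{l:partition} by induction on the length of the common prefix of the sequences involved, mirroring the one-step partition that underlies the $\mathcal{G}(m)$ case but now with three substitutions. The base case is a single one-step partition of the relevant open interval of $\{0,1\}^\infty$ into the pieces coming from $L$, $M$, $R$ together with a ``boundary'' piece that does not get refined further; the inductive step applies the appropriate one of $L$, $M$, $R$ to a partition already established. I will treat the $\inf$-side (the interval $(\overline{0}, 0\overline{1})$ with the sets $I_{\boldsymbol\sigma}$) in detail; the $\sup$-side is symmetric under $0\leftrightarrow 1$ and reversing the roles of $L$ and $R$.

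\medskip

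\noindent\textbf{Step 1: the one-step partition.} First I would establish, in $\{0,1\}^\infty$,
\[
(\overline{0}, 0\overline{1}) = L\big((\overline{0}, 0\overline{1})\big)\ \cup\ \{\inf(M(1\overline{0}))\}\ \cup\ M\big((\overline{0}, 0\overline{1})\big)\ \cup\ [\inf(M(10\overline{1})), \inf(M(\overline{1}))]\ \cup\ R\big((\overline{0}, 0\overline{1})\big),
\]
with the five pieces consecutive and (essentially) disjoint, and ordered left to right as listed. Concretely, a word $\mathbf{v} \in (\overline{0}, 0\overline{1})$ either has every $1$ followed by a $0$ — then $\mathbf{v} \in L((\overline{0},0\overline{1})) \cup \{\text{boundary}\}$, the boundary being sequences like $\inf(M(1\overline 0)) = 0\overline{01}$ (indeed $M(1\overline{0}) = 10\overline{01}$, so $\inf M(1\overline 0) = 0\overline{01}$, sitting exactly between the $L$-block, which is $\le \overline{01}$, and... hmm, I should be careful about the precise boundary words); or $\mathbf v$ contains $11$, in which case $\mathbf v \in R((\overline 0,0\overline 1))\cup\{\text{boundary}\}$; the intermediate range $[0\overline{01},\ldots]$ up to where $11$ first appears is covered by the $M$-block $M((\overline 0,0\overline 1))$ plus its two boundary points. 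The key computations are $L(\overline 0)=\overline 0$, $L(0\overline 1)=0\overline{01}$, $M(\overline 0)=\overline{01}$, $M(0\overline 1)=0\overline{10}$... — I need to recompute these against the definitions of $I_{\boldsymbol\sigma}$ for the tails $M\overline L$ and $M\overline R$ so that the boundary pieces match $I_{M\overline L}$ and $I_{M\overline R}$ exactly. This matching of endpoints is where the bookkeeping is delicate.

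\medskip

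\noindent\textbf{Step 2: induction and the primitive tails.} Given the one-step partition, I would apply $\sigma_1 \in \{L,M,R\}$ to it repeatedly. After applying $\sigma_1\cdots\sigma_n$ I get $(\overline 0, 0\overline 1)$ written as a union of $\sigma_1\cdots\sigma_n((\overline 0,0\overline 1))$-type pieces plus boundary pieces indexed by sequences whose tail is eventually $\overline L$, $\overline R$, or stabilises to one of the $M$-boundary tails $M\overline L$, $M\overline R$ — these are precisely the non-primitive $\boldsymbol\sigma$ with $I_{\boldsymbol\sigma}\ne\emptyset$. Using Lemma~\ref{l:infsup} (the statements $\inf(L(\mathbf u)) = L(\inf(\mathbf u))$, $\inf(R(\mathbf u))=R(\inf(\mathbf u))$, and $\inf(M(\mathbf u))=0M(\inf(\mathbf u))$ when $\inf(\mathbf u)=\inf_1(\mathbf u)$) I can identify these boundary pieces with the prescribed $I_{\boldsymbol\sigma}$. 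For a \emph{primitive} $\boldsymbol\sigma$, the remaining ``open'' piece $\sigma_1\cdots\sigma_n((\overline 0, 0\overline 1))$ shrinks: I would show its diameter (in the natural metric on $\{0,1\}^\infty$) tends to $0$, because primitivity forces $\sigma_1\cdots\sigma_n(0)$ and $\sigma_1\cdots\sigma_n(1)$ to grow and agree on longer and longer prefixes with the images of the endpoints, so the intersection over $n$ is the single point $\inf(\mathbf u)$ for the limit word $\mathbf u$ — which is exactly $I_{\boldsymbol\sigma}$, and is independent of the choice of limit word as noted before the lemma. This gives that every $\mathbf{v} \in (\overline 0, 0\overline 1)$ lies in some $I_{\boldsymbol\sigma}$, and the order claim ``$\boldsymbol\sigma < \boldsymbol\sigma'$ implies $\mathbf v < \mathbf v'$'' follows because the one-step partition is order-respecting and $L,M,R$ are order-preserving on infinite words (Lemma~\ref{l:infsup} proof), so the lexicographic order on sequences of substitutions tracks the order of the blocks at the first index where $\boldsymbol\sigma$ and $\boldsymbol\sigma'$ differ.

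\medskip

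\noindent\textbf{The main obstacle} I expect is Step 1 — pinning down the five boundary/block endpoints of the one-step partition so that they glue together with no gaps and no overlaps, and so that iterating them reproduces exactly the four cases in the definitions of $I_{\boldsymbol\sigma}$ and $J_{\boldsymbol\sigma}$ (in particular getting the closed-versus-open and singleton-versus-interval distinctions right for the $M\overline L$ and $M\overline R$ tails, and checking that the $M$-block really is $M((\overline 0, 0\overline 1))$ rather than something shifted, which is why the ``$0$'' appears in $\inf(M(\mathbf u)) = 0M(\inf(\mathbf u))$). The convergence-to-a-point argument in Step 2 for primitive $\boldsymbol\sigma$ is the second most delicate point, but it is standard for primitive $S$-adic systems and can be quoted in spirit from \cite{BertheDelecroix14}; here it also re-proves that $\inf(\mathbf u)$ is well-defined independently of the limit word.
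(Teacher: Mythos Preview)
Your overall architecture---a one-step decomposition of $(\overline{0},0\overline{1})$ that is then iterated, with primitive tails handled by a shrinking-interval argument---is exactly the paper's approach. However, your Step~1 as written is not a partition, and this is a genuine gap, not just bookkeeping you have yet to do.

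The problem is that you take the three ``open'' pieces to be the \emph{images} $L((\overline{0},0\overline{1}))$, $M((\overline{0},0\overline{1}))$, $R((\overline{0},0\overline{1}))$. These images are not intervals and do not cover. For instance, $00011\overline{0}\in(\overline{0},0\overline{1})$, but it contains $11$ (so is not an $L$-image), contains $00$ (so is not an $R$-image), is not an $M$-image, and lies strictly below both $\inf(M(1\overline{0}))=0\overline{01}$ and the interval $[\inf(M(10\overline{1})),\inf(M(\overline{1}))]$. Your case split ``every $1$ followed by $0$'' versus ``contains $11$'' is indeed a dichotomy, but ``contains $11$'' does not imply $\mathbf v\in R((\overline{0},0\overline{1}))$. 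The point is that $(\overline{0},0\overline{1})$ contains many words that are not $\inf$ of anything and are not in the image of any $\varphi\in\{L,M,R\}$; your decomposition must still account for them.

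The paper avoids this by never using images: its inductive object is the genuine interval $(\inf(\sigma(1\overline{0})),\inf(\sigma(10\overline{1})))$, and the one-step refinement splits it into the three subintervals $(\inf(\sigma\varphi(1\overline{0})),\inf(\sigma\varphi(10\overline{1})))$ for $\varphi\in\{L,M,R\}$, together with the singleton $\{\inf(\sigma M(1\overline{0}))\}$ and the closed interval $[\inf(\sigma M(10\overline{1})),\inf(\sigma M(\overline{1}))]$. That these five pieces abut with no gaps is exactly the content of the endpoint identities
\[
\inf(\sigma(1\overline{0}))=\inf(\sigma L(1\overline{0})),\quad \inf(\sigma L(10\overline{1}))=\inf(\sigma M(1\overline{0})),\quad \inf(\sigma M(\overline{1}))=\inf(\sigma R(1\overline{0})),
\]
which follow from Lemma~\ref{l:infsup} and $M(\overline{1})=R(1\overline{0})$, $R(10\overline{1})=10\overline{1}$. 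Once you replace your image pieces by these intervals, your Step~2 goes through essentially as you describe, and matches the paper.
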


\begin{proof}
We clearly have $I_{\boldsymbol{\sigma}} \subset (\overline{0}, 0\overline{1})$ for all $\boldsymbol{\sigma} \in \{L,M,R\}^\infty$.
For all $\sigma \in \{L,M,R\}^*$, Lemma~\ref{l:infsup} gives that $\inf(\sigma(1\overline{0})) = \inf(\sigma L(1\overline{0}))$, $\inf(\sigma L(10\overline{1})) = \inf(\sigma M(1\overline{0}))$, and we have $M(\overline{1}) = R(1\overline{0})$, $R(10\overline{1}) = 10\overline{1}$, thus
\begin{multline*}
(\inf(\sigma(1\overline{0})), \inf(\sigma(10\overline{1}))) =  (\inf(\sigma L(1\overline{0})), \inf(\sigma L(10\overline{1}))) \\
\cup \{\inf(\sigma M(1\overline{0}))\} \cup  (\inf(\sigma M(1\overline{0})), \inf(\sigma M(10\overline{1}))) \\
\cup [\inf(\sigma M(10\overline{1})), \inf(\sigma M(\overline{1}))] \cup (\inf(\sigma R(1\overline{0})), \inf(\sigma R(10\overline{1})))
\end{multline*}
(in this order). 
Inductively, we obtain that the sets~$I_{\boldsymbol{\sigma}}$ are ordered by the lexicographical order on $\{L,M,R\}^\infty$. 
Moreover, the union of sets~$I_{\boldsymbol{\sigma}}$ with $\boldsymbol{\sigma}$ ending in $M\overline{L}$ or $M\overline{R}$ covers $(\inf(1\overline{0}), \inf(10\overline{1})) = (\overline{0},0\overline{1})$, except for points lying in the intersection of nested intervals $\bigcap_{n\ge1} (\inf(\sigma_1\cdots\sigma_n(1\overline{0})), \inf(\sigma_1\cdots\sigma_n(10\overline{1})))$ for some $\boldsymbol{\sigma} = (\sigma_n)_{n\ge1} \in \{L,M,R\}^\infty$.
Since $\sigma_1 \cdots \sigma_n(\overline{0})$ is close to $\sigma_1 \cdots \sigma_n(0\overline{1})$ for large~$n$, these intervals tend to some $\mathbf{v} \in \{0,1\}^\infty$.
If $\boldsymbol{\sigma}$ is primitive, then $I_{\boldsymbol{\sigma}} = \{\mathbf{v}\}$. 
If $\sigma_{n+1}\sigma_{n+2}\cdots$ is $\overline{L}$ or $\overline{R}$, then we have $\mathbf{v} = \inf(\sigma_1\cdots\sigma_n(1\overline{0}))$ or $\mathbf{v} = \inf(\sigma_1\cdots\sigma_n(10\overline{1}))$, which are not in the intersection.

The proof for $(1\overline{0}, \overline{1}) = \bigcup_{\boldsymbol{\sigma} \in \{L,M,R\}^\infty} J_{\boldsymbol{\sigma}}$ is similar, with
\begin{multline*}
(\sup(\sigma(01\overline{0})), \sup(\sigma(0\overline{1}))) =  (\sup(\sigma L(01\overline{0})), \sup(\sigma L(0\overline{1}))) \\
\cup [\sup(\sigma M(\overline{0})), \sup(\sigma M(01\overline{0}))] \cup  (\sup(\sigma M(01\overline{0})), \sup(\sigma M(0\overline{1}))) \\
\cup \{\sup(\sigma M(0\overline{1}))\}  \cup (\sup(\sigma R(01\overline{0})), \sup(\sigma R(0\overline{1}))).
\end{multline*}
Hence, the $J_{\boldsymbol{\sigma}}$ are also ordered by the lexicographical order on $\{L,M,R\}^\infty$.
\end{proof}

\begin{proposition} \label{p:cover}
We have the partition
\[
(1,\mu_{0\overline{1}}) = \{\mu_\mathbf{u}:\, \mathbf{u} \in \mathcal{S}_{\{L,M,R\}}\} \cup \bigcup_{\sigma\in\{L,M,R\}^*M} \big([\mu_{\sigma(1\overline{0})}, \mu_{\sigma(01\overline{0})}] \cup [\mu_{\sigma(10\overline{1})}, \mu_{\sigma(0\overline{1})}]\big).
\]
\end{proposition}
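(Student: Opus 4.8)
The plan is to transport the two partitions of Lemma~\ref{l:partition} through the map $\mathbf{u}\mapsto\mu_{\mathbf{u}}$, exploiting that $\mu_{\mathbf{u}}$ depends on $\mathbf{u}$ only through the pair $(\inf(\mathbf{u}),\sup(\mathbf{u}))$. The first step is to record the monotonicity of this dependence: if $\inf(\mathbf{u})\le\inf(\mathbf{u}')$ and $\sup(\mathbf{u})\le\sup(\mathbf{u}')$, then $\mu_{\mathbf{u}}\le\mu_{\mathbf{u}'}$, strictly if one of the two inequalities is strict. Indeed, at $m=\mu_{\mathbf{u}}$ we have $f_{\mathbf{u}}(m)=g_{\mathbf{u}}(m)=\beta\ge2$ by Lemma~\ref{l:fg}; the same lemma then gives $f_{\mathbf{u}'}(m)\ge f_{\mathbf{u}}(m)=\beta$ and $g_{\mathbf{u}'}(m)\le g_{\mathbf{u}}(m)=\beta$, so $f_{\mathbf{u}'}\ge g_{\mathbf{u}'}$ at~$m$; since $f_{\mathbf{u}'}$ is decreasing and $g_{\mathbf{u}'}$ increasing in~$m$, the crossing $\mu_{\mathbf{u}'}$ lies at~$m$ or to its right. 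The same estimates show that $\mu$ varies continuously along monotone sequences of $(\inf,\sup)$-pairs, which I would use to pass to limits.

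Second, for each $\boldsymbol{\sigma}\in\{L,M,R\}^\infty$ I would determine the set of $\mu$-values carried by the ``rectangle'' $I_{\boldsymbol{\sigma}}\times J_{\boldsymbol{\sigma}}$ of realisable $(\inf,\sup)$-pairs. For primitive $\boldsymbol{\sigma}$ with limit word $\mathbf{u}$ both factors are singletons and the value is $\mu_{\mathbf{u}}\in\{\mu_{\mathbf{v}}:\mathbf{v}\in\mathcal{S}_{\{L,M,R\}}\}$. For $\boldsymbol{\sigma}=\sigma\overline{L}$ with $\sigma\in\{L,M,R\}^*M$ we have $I_{\boldsymbol{\sigma}}=\{\inf(\sigma(1\overline{0}))\}$ and $J_{\boldsymbol{\sigma}}=[\sup(\sigma(\overline{0})),\sup(\sigma(01\overline{0}))]$; here the identity $\sup(\sigma(1\overline{0}))=\sup(\sigma(\overline{0}))$ (a short computation from Lemma~\ref{l:infsup}, since $\sup(M(1\overline{0}))=\overline{10}=\sup(M(\overline{0}))$ and this common word equals its own $\sup_0$) shows that $\mu_{\sigma(1\overline{0})}$ is attained at the left endpoint of $J_{\boldsymbol{\sigma}}$ while $\mu_{\sigma(01\overline{0})}$ is attained at the right endpoint, with $\inf$ fixed; hence, letting $\sup(\mathbf{u})$ sweep $J_{\boldsymbol{\sigma}}$, the set of attained $\mu$-values is exactly $[\mu_{\sigma(1\overline{0})},\mu_{\sigma(01\overline{0})}]$. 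Symmetrically, using $\inf(\sigma(0\overline{1}))=\inf(\sigma(\overline{1}))$ (from $\inf(M(0\overline{1}))=\overline{01}=\inf(M(\overline{1}))$), the case $\boldsymbol{\sigma}=\sigma\overline{R}$, $\sigma\in\{L,M,R\}^*M$, carries $[\mu_{\sigma(10\overline{1})},\mu_{\sigma(0\overline{1})}]$; all remaining $\boldsymbol{\sigma}$ have $I_{\boldsymbol{\sigma}}=J_{\boldsymbol{\sigma}}=\emptyset$ and carry nothing.

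Third, I would assemble the partition. By Lemma~\ref{l:partition} the $I_{\boldsymbol{\sigma}}$ partition $(\overline{0},0\overline{1})$ and the $J_{\boldsymbol{\sigma}}$ partition $(1\overline{0},\overline{1})$, both in the lexicographic order on $\{L,M,R\}^\infty$; combining this with the monotonicity of~$\mu$ and with Lemma~\ref{l:sigma} (which guarantees that any $\mathbf{u}$ with $\inf(\mathbf{u})\ge\inf(\sigma(1\overline{0}))$, $\sup(\mathbf{u})\le\sup(\sigma(0\overline{1}))$ really ends with $\sigma(\cdot)$ or with $\sigma'(\overline{0})$, so that the rectangles exactly account for all sequences) yields that $(1,\mu_{0\overline{1}})$ is the disjoint union over $\boldsymbol{\sigma}$ of the $\mu$-images of $I_{\boldsymbol{\sigma}}\times J_{\boldsymbol{\sigma}}$. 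The left endpoint is~$1$ because $\mu_{L^nM(1\overline{0})}\to1$ as $n\to\infty$ (both $\inf$ and $\sup$ of $L^nM(1\overline{0})$ tend to $\overline{0}$, so $f_{L^nM(1\overline{0})}(m)\to\infty$ for each fixed $m>1$ while $g$ stays below $1+m$, pushing the crossing down to~$1$); the right endpoint $\mu_{0\overline{1}}$ corresponds to the all-$\overline{R}$ sequence and is excluded, being approached strictly from below by $\mu_{R^nM(0\overline{1})}$; and consecutive pieces share an endpoint because the relevant orbit endpoints coincide, e.g.\ $\inf(\sigma M(\overline{1}))=\inf(\sigma R(1\overline{0}))$, $\sup(\sigma M(0\overline{1}))=\sup(\sigma R(01\overline{0}))$ (from $M(\overline{1})=R(1\overline{0})$, $M(0\overline{1})=R(01\overline{0})$) and $\inf(\sigma L(10\overline{1}))=\inf(\sigma M(1\overline{0}))$, as already used in Lemma~\ref{l:partition}. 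Reading off which pieces are non-degenerate gives precisely the asserted partition.

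The main obstacle is the compatibility of the two bookkeepings: a single $\mathbf{u}$ has $\inf(\mathbf{u})$ in some $I_{\boldsymbol{\sigma}}$ but $\sup(\mathbf{u})$ in a possibly different $J_{\boldsymbol{\sigma}'}$, so one must check that the $\mu$-intervals produced on the inf side and on the sup side really share their endpoints — this is the content of the identities $\sup(\sigma(1\overline{0}))=\sup(\sigma(\overline{0}))$ and $\inf(\sigma(0\overline{1}))=\inf(\sigma(\overline{1}))$ for $\sigma\in\{L,M,R\}^*M$ and their consequences — and that Lemma~\ref{l:sigma} leaves no sequence unaccounted for. Granting this, disjointness of the pieces is immediate from the strict monotonicity of~$\mu$ along the lexicographic order, and exhaustiveness from the shrinking of the nested open intervals to single points (as in the proof of Lemma~\ref{l:partition}), the limiting points being exactly the $\mu_{\mathbf{u}}$ with $\mathbf{u}\in\mathcal{S}_{\{L,M,R\}}$, which completes the argument.
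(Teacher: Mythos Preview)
Your approach and the paper's share the same two ingredients---the ordered partitions $I_{\boldsymbol{\sigma}}$, $J_{\boldsymbol{\sigma}}$ of Lemma~\ref{l:partition} and the monotonicity of $f,g$ from Lemma~\ref{l:fg}---but the paper organises the argument more directly. Rather than computing the $\mu$-image of each rectangle $I_{\boldsymbol{\sigma}}\times J_{\boldsymbol{\sigma}}$ and then arguing that these images tile $(1,\mu_{0\overline{1}})$ without gaps, the paper \emph{fixes} $m$ and pushes the two partitions into $\beta$-space: it sets $I'_{\boldsymbol{\sigma}}(m)=\{g_\mathbf{v}(m):\inf(\mathbf{v})\in I_{\boldsymbol{\sigma}}\}$ and $J'_{\boldsymbol{\sigma}}(m)=\{f_\mathbf{v}(m):\sup(\mathbf{v})\in J_{\boldsymbol{\sigma}}\}$, so that by Lemmas~\ref{l:fg} and~\ref{l:partition} the $I'_{\boldsymbol{\sigma}}(m)$ partition $(1,g_{1\overline{0}}(m))$ and the $J'_{\boldsymbol{\sigma}}(m)$ partition $(1,f_{0\overline{1}}(m))$, ordered \emph{oppositely} in~$\boldsymbol{\sigma}$ above~$2$. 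Since $\max(f_\mathbf{u}(m),g_\mathbf{u}(m))\ge 2$ forces at least one of $I'_{\boldsymbol{\sigma}}(m)$, $J'_{\boldsymbol{\sigma}}(m)$ into $[2,\infty)$, there must exist $\boldsymbol{\sigma}$ with $I'_{\boldsymbol{\sigma}}(m)\cap J'_{\boldsymbol{\sigma}}(m)\ne\emptyset$; reading off the type of $\boldsymbol{\sigma}$ immediately places $m$ in the correct piece. This is exactly your statement ``$m$ lies in the $\mu$-image of $I_{\boldsymbol{\sigma}}\times J_{\boldsymbol{\sigma}}$'', but obtained without ever needing to check that consecutive $\mu$-intervals abut or to pass to limits along nested intervals.

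Two detours in your write-up are unnecessary. First, Lemma~\ref{l:sigma} plays no role here: it concerns desubstituting actual sequences, whereas the proposition is purely about the functions $f,g,\mu$, which depend only on the abstract pair $(\inf,\sup)$. Second, the ``obstacle'' you flag---that $\inf(\mathbf{u})$ and $\sup(\mathbf{u})$ might lie in $I_{\boldsymbol{\sigma}}$ and $J_{\boldsymbol{\sigma}'}$ for different $\boldsymbol{\sigma},\boldsymbol{\sigma}'$---is not an obstacle at all, since you are free to evaluate $\mu$ on any pair $(\mathbf{a},\mathbf{b})\in I_{\boldsymbol{\sigma}}\times J_{\boldsymbol{\sigma}}$ regardless of whether it is realised by a single word; the identities $\sup(\sigma(1\overline{0}))=\sup(\sigma(\overline{0}))$ and $\inf(\sigma(0\overline{1}))=\inf(\sigma(\overline{1}))$ you note simply identify the corners of the rectangle with the named $\mu$-values. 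Your route works, but the paper's dual formulation in $\beta$-space for fixed $m$ avoids the bookkeeping of the tiling step altogether.
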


\begin{proof}
For $m \in (1,\mu_{0\overline{1}})$, $\boldsymbol{\sigma} \in \{L,M,R\}^\infty$, let
\[
I'_{\boldsymbol{\sigma}}(m) = \begin{cases}\{g_\mathbf{u}(m):\, \mathbf{u}\ \mbox{is a limit word of}\ \boldsymbol{\sigma}\} & \mbox{if $\boldsymbol{\sigma}$ is primitive}, \\ \{g_{\sigma_1\sigma_2 \cdots \sigma_n(1\overline{0})}(m)\} & \mbox{if}\ \sigma_n\sigma_{n+1}\cdots = M\overline{L},\, n\ge1, \\ [g_{\sigma_1\sigma_2 \cdots \sigma_n(\overline{1})}(m), g_{\sigma_1\sigma_2 \cdots \sigma_n(10\overline{1})}(m)] & \mbox{if}\ \sigma_n\sigma_{n+1}\cdots = M\overline{R},\, n\ge1, \\ \emptyset & \mbox{otherwise},\end{cases}
\]
\[
J'_{\boldsymbol{\sigma}}(m) = \begin{cases}\{f_{\mathbf{u}}(m):\, \mathbf{u}\ \mbox{is a limit word of}\ \boldsymbol{\sigma}\} & \mbox{if $\boldsymbol{\sigma}$ is primitive}, \\ [f_{\sigma_1\sigma_2 \cdots \sigma_n(\overline{0})}(m), f_{\sigma_1\sigma_2 \cdots \sigma_n(01\overline{0})}(m)] & \mbox{if}\ \sigma_n\sigma_{n+1}\cdots = M\overline{L}, n\ge1, \\ \{f_{\sigma_1\sigma_2 \cdots \sigma_n(0\overline{1})}(m)\} & \mbox{if}\ \sigma_n\sigma_{n+1}\cdots = M\overline{R},\, n\ge1, \\ \emptyset & \mbox{otherwise}.\end{cases} 
\]
By Lemmas~\ref{l:fg} and~\ref{l:partition}, we have 
\[
(1,g_{1\overline{0}}(m)) = \bigcup_{\boldsymbol{\sigma}\in\{L,M,R\}^\infty} I'_{\boldsymbol{\sigma}}(m) \quad \mbox{and} \quad (1,f_{0\overline{1}}(m)) = \bigcup_{\boldsymbol{\sigma}\in\{L,M,R\}^\infty} J'_{\boldsymbol{\sigma}}(m).
\]
(Note that $f_\mathbf{u}(m)$ is close to $f_{\mathbf{u}'}(m)$ if $\sup(\mathbf{u})$ is close to $\sup(\mathbf{u}')$, $g_\mathbf{u}(m)$ is close to $g_{\mathbf{u}'}(m)$ if $\inf(\mathbf{u})$ is close to $\inf(\mathbf{u}')$.)
If $\boldsymbol{\sigma} < \boldsymbol{\sigma}'$, then we have $\beta > \beta'$ if $\beta \in I'_{\boldsymbol{\sigma}}(m)$, $2 \le \beta' \in I'_{\boldsymbol{\sigma}'}(m)$, and $\beta < \beta'$ if $2 \le \beta \in J'_{\boldsymbol{\sigma}}(m)$, $\beta' \in J'_{\boldsymbol{\sigma}'}(m)$, by Lemmas~\ref{l:fg} and~\ref{l:partition}.
Since $\max(f_\mathbf{u}(m), g_\mathbf{u}(m)) \ge 2$ for all $\mathbf{u} \in \{0,1\}^\infty$ and $\inf(\sigma M(1\overline{0})) \le \inf(\sigma M(\overline{0}))$, $\sup(\sigma M(0\overline{1})) \ge \sup(\sigma M(\overline{1}))$ for all $\sigma \in \{L,M,R\}^*$, we have $I'_{\boldsymbol{\sigma}}(m) \subset [2,\infty)$ or $J'_{\boldsymbol{\sigma}}(m) \subset [2,\infty)$ for all $\boldsymbol{\sigma} \in \{L,M,R\}^\infty$.
Therefore, we have $I'_{\boldsymbol{\sigma}}(m) \cap J'_{\boldsymbol{\sigma}}(m) \ne \emptyset$ for some $\boldsymbol{\sigma}\in\{L,M,R\}^\infty$.
If $\boldsymbol{\sigma}$ is primitive, this means that $m = \mu_\mathbf{u}$. 
If $\sigma_n\sigma_{n+1}\cdots = M\overline{L}$, then we have $g_{\sigma_1\cdots\sigma_n(1\overline{0})}(m) \in [f_{\sigma_1\cdots\sigma_n(\overline{0})}(m), f_{\sigma_1\cdots\sigma_n(01\overline{0})}(m)]$, which means that $m \in [\mu_{\sigma_1\cdots\sigma_n(1\overline{0})}, \mu_{\sigma_1\cdots\sigma_n(01\overline{0})}]$, see Figure~\ref{f:musigma0}.
Similarly, if $\sigma_n\sigma_{n+1}\cdots = M\overline{R}$, then we have that $m \in [\mu_{\sigma_1\cdots\sigma_n(10\overline{1})}, \mu_{\sigma_1\cdots\sigma_n(0\overline{1})}]$. 
\end{proof}

\begin{proof}[Proof of Theorem~\ref{t:main}]
This is a direct consequence of Propositions~\ref{p:upper}, \ref{p:lower} and~\ref{p:cover}. 
\end{proof}

\section{Final remarks and open questions}
By \cite{KomornikLaiPedicini11,BakerSteiner17,Kwon18}, there are simple formulas for $\mu_{\sigma(1\overline{0})}$, $\mu_{\sigma(\overline{0})}$ and $\mu_{\sigma(0\overline{1})}$, $\sigma \in \{L,R\}^*M$, and for $\mu_\mathbf{u}$, $\mathbf{u} \in \mathcal{S}_{L,R}$. 
This is because, for $\mathbf{u} \in \{\sigma(1\overline{0}), \sigma(0\overline{1})\}$, $\sigma \in \{L,R\}^*M$, or $\mathbf{u} \in \mathcal{S}_{L,R}$, we have $\inf(\mathbf{u}) = 0\mathbf{v}$, $\sup(\mathbf{u}) = 1\mathbf{v}$ for some $\mathbf{v}$, thus $(\beta-1) (1+\pi_\beta(0\mathbf{v})) = (\beta-1)^2 = \beta \pi_\beta(1\mathbf{v})$, where $\beta > 1$ is defined by $\pi_\beta(20\mathbf{v}) = 1$, which gives that $\mu_\mathbf{u} = (\beta-1)^2$. 
For $\mathbf{u} = \sigma(\overline{0})$, we have $\inf(\mathbf{u}) = \overline{0w1}$, $\sup(\mathbf{u}) = \overline{1w0}$, with $\sigma(0) = 0w1$, and
\[
(\beta-1) (1+\pi_\beta(\overline{0w1})) = (\beta-1)\beta \pi_\beta(\overline{10w}) = \frac{(\beta-1)^2\beta^{|\sigma(0)|}}{\beta^{|\sigma(0)|}-1} =  \beta \pi_\beta(\overline{1w0}),
\]
where $\beta > 1$ is defined by $\pi_\beta(20w\overline{0}) = 1$ and $|\sigma(0)|$ is the length of $\sigma(0)$, hence $\mu_{\sigma(\overline{0})} = (\beta-1)^2\beta^{|\sigma(0)|}/(\beta^{|\sigma(0)|}-1)$.
Are there similar formulas for $\sigma \in \{L,M,R\}^*M$?
 
In \cite{BakerSteiner17,Kwon18}, it was proved that the Hausdorff dimension of $\{\mu_\mathbf{u}:\, \mathbf{u} \in \mathcal{S}_{L,R}\}$ is $0$, using that the number of balanced words grows polynomially. 
What is the complexity of $\mathcal{S}_{L,M,R}$?

As mentioned in the Introduction, we know the generalised Komornik--Loreti constant $\mathcal{K}(m)$ only for $m=2$ and when $\mathcal{G}(m) = 1+\sqrt{m} = \mathcal{K}(m) = \mathcal{L}(m)$. 
This is due to the fact that it is usually difficult to study maps with two holes; see Figure~\ref{f:T}.
(For $m=2$, we can use the symmetry of the map~$T$, and for $\mathcal{L}(m) = 1+\sqrt{m}$, we can restrict to sequences in $\{0,1\}^\infty$. )
New ideas are needed for the general case.

Finally, \emph{Sturmian holes} are key ingredients in \cite{Sidorov14}, where supercritical holes for the doubling map are studied. 
Do our Thue--Morse--Sturmian sequences also play a role in this context? 
\bibliographystyle{amsalpha}
\bibliography{criticalbases}
\end{document}